%versione rivista dopo il referee report del 28 aprile 2021. Inviato alla rivista.
%postata con tre correzioni su arxiv come v2 il 22.08.04 (cerca '%correggere')
%%%%%%%%%%%%%%%%%%%%%%%%%%%%%%%%%%%%%%
\documentclass[reqno]{amsart}
\usepackage{xspace}
\usepackage[bookmarksnumbered,colorlinks]{hyperref}
\usepackage{graphics}
%\def\bb#1\eb{\textcolor{blue}
%{#1}} %

%\def\br#1\er{\textcolor{red}
%{#1}} %

%\def\bv#1\ev{\textcolor{green}
%{#1}} %
%\def\bc#1\ec{\textcolor{cyan}
%{#1}} %

%\usepackage{refcheck}
%%%%%%%%%%%%%%%%%%%%%%%%%%%%%%%%%%%%%%%%%%%%%%%%%%%%%%%%%%%%%%%%%%%%%
\newcommand{\bt}{\begin{theorem}}                     
\newcommand{\et}{\end{theorem}}                       
\newcommand{\bd}{\begin{definition}}                  
\newcommand{\ed}{\end{definition}}                    
\newcommand{\bl}{\begin{lemma}}                       
\newcommand{\el}{\end{lemma}}                                   
\newcommand{\bpr}{\begin{proposition}}                  
\newcommand{\epr}{\end{proposition}}                    
\newcommand{\bere}{\begin{remark}}                      
\newcommand{\ere}{\end{remark}}                         
\newcommand{\beq}{\begin{equation}}
\newcommand{\eeq}{\end{equation}}
\def\bal#1\eal{\begin{align}#1\end{align}}              
\def\baln#1\ealn{\begin{align*}#1\end{align*}}          
\def\bml#1\eml{\begin{multline}#1\end{multline}}        
\def\bmln#1\emln{\begin{multline*}#1\end{multline*}}  
\def\bga#1\ega{\begin{gather}#1\end{gather}}
\def\bgan#1\egan{\begin{gather*}#1\end{gather*}}
%%%%%%%%%%%%%%%%%%%%%%%%%%%%%%%%%%%%%%%%%%%%%%%%%%%%%%%%%%%%%%%%%%%%%
\newcommand{\de}{\mathrm{d}}                        
\newcommand{\N}{\ensuremath{\mathbb{N}}\xspace}     
\newcommand{\R}{\ensuremath{\mathbb{R}}\xspace}     
\newcommand{\eps}{\varepsilon}                      
                   
\newcommand{\inte}{\int_0^1\!\!}

%%%%%%%%%%%%%%%%%%%%%%%%%%%%%%%%%%%%%%%%%%%%%%%%%%%%%%%%%%%%%%%%%%%%%%%%%%%%
\newtheorem*{theorem*}{Theorem}
\newtheorem{theorem}{Theorem}[section]
\newtheorem{corollary}[theorem]{Corollary}
\newtheorem{lemma}[theorem]{Lemma}
\newtheorem{proposition}[theorem]{Proposition}
\theoremstyle{definition}
\newtheorem{definition}[theorem]{Definition}
\theoremstyle{remark}
\newtheorem{remark}[theorem]{Remark}
\theoremstyle{remark}
\newtheorem{example}[theorem]{Example}

%%%%%%%%%%%%%%%%%%%%%%%%%%%%%%%%%%%%%%%%%%%%%%%%%%%%%%%%%%%%%%%%%%%%%
\hyphenation{Lo-ren-tzian}
%%%%%%%%%%%%%%%%%%%%%%%%%%%%%%%%%%%%%%%%%%%%%%%%%%%%%%%%%%%%%%%%%%%%%
\title[Connecting and closed geodesics of a Kropina metric]%
{Connecting and closed  geodesics\\ of a Kropina metric}

 \author[E. Caponio]{Erasmo Caponio}
 \address{Dipartimento di Meccanica, Matematica  e Management, Politecnico di Bari,  Bari, Italy}
 \email{erasmo.caponio@poliba.it}
 \thanks{E.C. and A. M. are partially supported by  PRIN 2017JPCAPN {\em Qualitative and quantitative aspects of nonlinear PDEs.}}

 \author[F. Giannoni]{Fabio Giannoni}
 \address{Scuola di Scienze e Tecnologie, Universit\`a di Camerino, Camerino, Italy}
 \email{fabio.giannoni@unicam.it}
 
 \author[A. Masiello]{Antonio Masiello}
 \address{Dipartimento di Meccanica, Matematica  e Management, Politecnico di Bari, Bari, Italy}
 \email{antonio.masiello@poliba.it}

 \author[S. Suhr]{Stefan Suhr}
\thanks{Part of this work has been developed while S.S.  was a visiting professor at Politecnico di Bari; he thanks  Politecnico di Bari and the Department of Mechanics, Mathematics and Management for the support. S.S. is partially supported 
by the Deutsche Forschungsgemeinschaft (DFG, German Research Foundation) -- Project-ID 281071066 -- TRR 191.}
\address{Fakult\"at f\"ur Mathematik, Ruhr-Universit\"at Bochum, Bochum, Germany} 
 \email{Stefan.Suhr@rub.de}

\subjclass[2000]{58E10, 53C22, 53C60}

\keywords{Kropina metric, closed geodesic, lightlike Killing field, Zermelo's navigation.}

%\date{\today}

\begin{document}

\begin{abstract}
We prove some results about existence of connecting and closed geodesics in a manifold  endowed with a  Kropina metric. These have applications to both null geodesics of spacetimes endowed with a null Killing vector field and Zermelo's navigation problem with critical wind. \end{abstract}
\maketitle
\section{Introduction}\label{intro}
Kropina metrics are  homogeneous Lagrangians defined as the ratio of a Riemannian metric and a one-form, see \cite{Kropina61}.
Let   $S$ be a connected smooth manifold of dimension at least $2$, and let $g_0$, $\omega$  be, respectively, a  Riemannian metric and a one-form on $S$. Let us assume that $\omega$ does not vanish at any point on $S$ and, for each $x\in S$,  let $\mathcal N_x$ be the  kernel  of $\omega_x$ in  $T_xS$. The Kropina metric on $S$ associated to $g_0$ and $\omega$ is the Lagrangian $K\colon TS\setminus \mathcal N\to \R\setminus\{0\}$, defined as $K(v):=\frac{g_0(v,v)}{\omega(v)}$, where $\mathcal N:=\cup_{x\in S}\mathcal N_x$.

For our purposes, it will be convenient to define $K$ on  $\mathcal A=\{v\in TS\colon -\omega(v)>0\}$ as 
\beq\label{kropina}
K(v)=-\frac{g_0(v,v)}{2\omega(v)}
\eeq
in such a way that $K$ is a positive function on $\mathcal A$. On this domain $K$ is a {\em conic Finsler metric} according to  \cite[Definition 3.1 (iii)]{JavSan14}, i.e. at each point $x\in S$, $K_x$ is a Minkowski  norm on $\mathcal A_x$, in particular for all $x\in S$ and for each   $v\in \mathcal A_x$ its 
fundamental tensor
\[\mathbf{g}_v(u,w):= \frac{\partial^2}{\partial t\partial s} \frac{1}{2}K^2(v+tu+sw)_{|t=s=0},\]
$u,w\in T_{x}S$,
is positive definite \cite[Corollary 4.12]{JavSan14}.
We shall often call the couple $(S,K)$ a {\em Kropina space}.

Notice that for all $x\in S$, $0\not\in \mathcal A_x$ though it is an accumulation point of the  {\em indicatrix} $\mathcal I_x$ of $K$ at $x$, i.e. the set of vectors $\{v\in \mathcal A_x: K(v)=1\}$. Thus, $K$ is not extendible by continuity at $0$. We point out that $\{0\}\cup\mathcal I_x$ is a compact strongly convex hypersurface in $T_x S$ (it is an ellipsoid, see \cite[Proposition 2.57]{CaJaSa14}).

The interest in the study of Kropina metrics \eqref{kropina} comes from some  geometric and physical models.

\noindent A first example where a Kropina space $(S,K)$ appears is in general relativity. Let $(S,K)$ be a Kropina space, let us consider the product 
manifold $S\times\R$ and let us denote by $t$ the natural coordinate on $\R$ and by $\pi\colon S\times\R\to S$ the canonical projection.
Let $g$ be the bilinear tensor field on $S\times\R$ defined by
\beq\label{g}
g=\pi^*g_0+\pi^*\omega\otimes\de t+\de t\otimes\pi^*\omega.
\eeq
Since $\omega$ doesn't vanish on $S$, we have that $g$  is a Lorentzian metric,  $t$ is a temporal function and  $-\nabla t$ is timelike (see \cite[Proposition 3.3]{CaJaSa14}). Hence,  $(S\times\R, g)$ is time-oriented by $-\nabla t$; moreover $\partial_t$ is a lightlike Killing vector field.  
Observe now that a vector $(v,\tau)\in T_x S\times \R$ is future pointing and lightlike if and only if $v\in \mathcal A_x$ and $\tau = K(v)$, $K$ in \eqref{kropina}. Analogously,
$(v,\tau)\in T_x S\times \R$ is past pointing and lightlike if and only if $v\in \mathcal - A_x$ and $\tau = -K(-v)$.

Thus, the  future (resp. past) lightlike cones of the conformal class of $(S\times\R, g)$ are described by the flow lines of $\partial_t$ and  the graph of the function $K$ on $\mathcal A$ (resp. $-\mathcal A$).

This correspondence has been introduced in \cite{CaJaSa14} extending that one  between standard stationary Lorentzian metrics and Finsler metrics of Randers type (\cite{CaJaMa11, CaJaS11}). Actually in \cite{CaJaSa14}, the class of spacetimes  $S\times\R$ considered is larger and the Killing vector field $\partial_t$ can also be spacelike in some points (but in this case, the associated  Finsler geometry  is not simply of Kropina or Randers type, see \cite{CaJaSa14} for details).

A second model in which Kropina spaces appear is related to the {\em Zermelo's navigation problem} which consists in finding the paths between two points $x_0$ and $x_1$ that minimize the travel time of a ship or an airship moving in a wind in a Riemannian landscape $(S,g_0)$ (see \cite{Ze31, Carath67,Sh03,YosSab12}). If the wind is time-independent then it can be represented  by a vector field $W$ on $S$. When  $g_0(W,W)=1$,  called {\em critical wind } in \cite{CaJaSa14}, the solutions of the problem (if they exist) are the pregeodesics of the Kropina metric $K(v):=-\frac{g_0(v,v)}{2g_0(W,v)}$ associated to the Zermelo's navigation data $g_0$ and $W$ which are minimizer of the length functional associated to $K$  see \cite[Corollary 6.18 -(i)]{CaJaSa14}. This  result and more general ones contained in \cite{CaJaSa14} are strictly connected to the causality properties of the spacetime $S\times \R$ which is also associated to Zermelo navigation data (\cite[Theorem 6.15]{CaJaSa14}).

Recently,  Kropina metrics have also been considered in relation to  the so-called chains in a CR manifold $M$ \cite{ChMaMM19}. Indeed, these can be viewed as curves which are non-constant projections of  null geodesics for an indefinite metric on a circle bundle over $M$ whose action has infinitesimal generator which is a null Killing vector field. This interpretation leads to a very interesting relation of Kropina spaces with CR geometries and Lorentz geometry, see also 
\cite{Feffe76,Koch88}.

In \cite[Question 2.5.1]{BurMat21}, the authors asked if a Kropina metric on a compact  manifold admits a closed geodesic (this problem was posed in \cite[Remark 6.29]{CaJaSa14} as well).   In  the present  work, we  give some results in that direction  plus results concerning the existence of multiple geodesics between two points.

 The paper is organized as follows. In  Section~\ref{pre}, we introduce  some notations, and we give  some preliminary results; in particular we recall Theorem~\ref{fermatprinciple} from \cite{CaJaSa14} where a connection between geodesics of a Kropina space $(S,K)$ and lightlike geodesics of the spacetime $(S\times\R, g)$, $g$ in \eqref{g} is established.

In Section~\ref{approx}, we introduce an approximation framework of the spacetime $(S\times\R, g)$,  by  a family of spacetimes $(S\times\R, g_{\eps})$, $\eps>0$, where $\partial_t$ is a timelike Killing field (recall that $\partial_t$ is a null Killing field for the spacetime $(S\times\R, g)$). These type of spacetimes are called in the literature {\em standard stationary} and variational methods for their geodesics and their causal properties  are nowdays well-developed 
(cf. \cite{GiaMas91,FoGiMa95, Masiel94, GiaPic99, BaCaFl06, CaFlS08, Masiel09, CaJaMa11, CaJaS11}). 
In particular, Theorem~\ref{fermat} (obtained in \cite{CaJaMa11}) is the precursor of Theorem~\ref{fermatprinciple} and together with Lemma~\ref{deltabounded}, it plays a fundamental role in  proving  existence of geodesics of the   Kropina space associated to the limit  spacetime.
We emphasize that the same approximation  has been  profitably employed in \cite{BaCaFl17} to study geodesic connectedness of a globally hyperbolic spacetime endowed with a null Killing vector field. 
%
%From a variational viewpoint,  one could try to prove the existence of a closed geodesic or a geodesic connecting two points by minimizing the energy functional in \eqref{energy}, which is clearly bounded from below. Anyway the paths where $E$ is defined lie in the cone bundle $\mathcal A$ which is an open subset of $TS$ and one  should  prove  that the limit of a minimizing sequence is an admissible curve (and also non-trivial, in the closed geodesic problem).
%This technical difficulty can be overcome  by introducing  
%%We will prove that a sequence $z_{\eps}=(\gamma_\eps,t_\eps)$ of  future pointing lightlike geodesics of $(S\times\R, g_{\eps})$ converges to a future pointing lightlike geodesic $z=(\gamma,t)$ of $(S\times\R,g)$ which provides a geodesic $\gamma$ of $(S,K)$. 

In Section~\ref{2points}, we obtain  some results about the existence of geodesics  between two given points of a Kropina space (see Proposition~\ref{geoconnect}, Corollary~\ref{geoconn},  Theorem~\ref{multiplegeoconnect} and Corollary~\ref{cormult}). In particular, Corollary~\ref{cormult} implies that  the Zermelo's navigation problem associated to the data $(S,g_0)$ and $W$, $g_0(W,W)=1$, has always a solution in each connected component of the  space of curves between two points $x_0,\ x_1\in S$ (see  Corollary~\ref{zermelo}).

Section~\ref{closedgeos} is devoted to the closed geodesic problem. Existence results are given in Theorem~\ref{components}, Corollary~\ref{corcomp},  Theorem~\ref{simpconn}.
 In Examples~\ref{katok} and \ref{katokk}, we apply Theorem~\ref{simpconn} to prove the existence of a closed geodesic in some particular type of  compact Kropina space (in particular, Example~\ref{katok}  for an odd-dimensional sphere can be considered as the Kropina limit of a family of Finsler metrics of the type in Katok's example (see \cite{Ziller83}). A couple of results for  a compact manifold endowed with a periodic  Killing vector field $Y$ and a one-form invariant by the flow of $Y$ are further  given in Theorems~\ref{invariant} and \ref{constant}. We notice  that the latter holds for any compact Lie group endowed with  a bi-invariant  Riemannian metric and  a left-invariant one-form (Corollary~\ref{lie}).

Finally,  we point out that  a fundamental and natural  assumption for the existence  of connecting or closed geodesics in a Kropina manifold  is that the space of  paths considered,  according to the boundary conditions that the  geodesics have to  satisfy, must contain at least one admissible path, i.e. a curve $\gamma$ such that $\omega(\dot\gamma(s))<0$  everywhere (under the point of view of the Zermelo's navigation problem, we can  say that there must be a ``navigable region''). The problem of the existence of such an admissible path is  related to the existence of  horizontal paths for the distribution  of hyperplanes pointwise representing the kernel of $\omega$ and, indeed, a non integrability condition for it ensures that there do exist admissible paths  (see Corollaries \ref{cormult} and \ref{corcomp}). Nevertheless, when there exist points in $S$ which are not reachable from a given point by an admissible path, we show in the Appendix that the boundary of the set of reachable points is a smooth hypersurface in $S$.

\section{Some notations and known results}\label{pre}
The set of   continuous and piecewise smooth, admissible curves from $x_0$ to $x_1$ will be denoted by $\Omega_{x_0x_1}(\mathcal A)$, i.e.   
\bmln\Omega_{x_0x_1}(\mathcal A):=\{\gamma\colon[0,1]\to S: \gamma(0)=x_0,\ \gamma(1)=x_1,\\ \dot \gamma^-(s), \dot\gamma^+(s)\in \mathcal A,\ \forall s\in[0,1]\}\emln
(here  $\dot\gamma^-(s)$ and $\dot\gamma^+(s)$ denote respectively the left and the right derivative  of $\gamma$ at the point  $s$).
A geodesic of $(S,K)$  connecting a point $x_0\in S$ to $x_1\in S$ is a critical point of the energy functional 
\beq\label{energy}
E(\gamma)= \frac{1}{2}\int_0^1K^2(\dot \gamma)\de s,
\eeq
defined on $\Omega_{x_0x_1}(\mathcal A)$.
Notice that, as $\mathcal A$ is an open subset of $TS$, variational  vector fields along a curve $\gamma\in \Omega_{x_0x_1}(\mathcal A)$ are well-defined and then it makes sense to define geodesics as critical points of $E$.
Moreover since the fundamental tensor of $K$ is positive definite on $\mathcal A$, it can be proved that the Legendre transform of $K$ is injective (see \cite[Proposition 2.51]{CaJaSa14}) and then  a critical point $\gamma$ of $E$ is smooth and  parametrized with $K(\dot\gamma)=\mathrm{const.}$ (see also \cite[Lemma 2.52]{CaJaSa14}).

Analogously, a closed geodesic is  a critical point of $E$ defined on the set
\[\Omega(\mathcal A)=\{\gamma\colon[0,1]\to S: \gamma(0)=\gamma(1),\ \dot \gamma^-(s), \dot\gamma^+(s)\in \mathcal A,\ \forall s\in[0,1]\},\]

\bere
We observe that, also in the simplest cases, a Kropina space can be not geodesically connected. This essentially may happen because the set $\Omega_{x_0x_1}(\mathcal A)$ is  empty. For example, consider a constant one-form $\omega$ on $\R^n$ endowed with the 
Euclidean metric $\langle\cdot,\cdot\rangle$. It can be easily seen that the geodesic of $\left(\R^n, \frac{\langle\cdot,\cdot\rangle}{\omega(\cdot)}\right)$ are the straight lines which don't lie on the hyperplanes parallel to the kernel of $\omega$. Hence, there is no geodesic (and no admissible curve) connecting two points belonging to one of such hyperplanes. 
\ere
\bere
Notice also that since $K(-v)=-K(v)$, if $\gamma\colon[0,1]\to S$ is a geodesic of $(S,K)$ (hence, according to the above definition, $\dot\gamma([0,1])\subset \mathcal A$) the reverse curve $\tilde \gamma(s)=\gamma(1-s)$ is  a geodesic of $(S, -K)$ with $-K$ viewed as a conic Finsler metric on $-\mathcal A$.
\ere

Geodesics of a Kropina space $(S,K)$ are related to lightlike geodesics of the  product spacetime $(S\times \R,g)$, $g$ as in \eqref{g}. 
We recall that, since the coefficients of the metric $g$ do not depend on the variable $t$, the vector field $\partial_t \equiv (0,1)$ is a  Killing vector field for $(S\times \R,g)$, hence if $z=z(s)=(\gamma(s),t(s))$ is a geodesic of $(S\times \R,g)$ then $g(\partial_t,\dot z)=\omega(\dot x)$ must be constant. Then the following theorem can  be proved 
\begin{theorem}[\cite{CaJaSa14}, Corollary 5.6 (i)]\label{fermatprinciple}
Let $\gamma$ be a piecewise smooth admissible curve in $(S,K)$. Then $\gamma$ is a pregeodesic of the Kropina space $(S,K)$ such that $\omega(\dot\gamma)=\mathrm{const}< 0$
if and only if the curve $z$ defined by
$z(s)=\big(\gamma(s), t(s)= t_0+\int_0^s K(\dot\gamma)\de r\big)$, $t_0\in\R$, is a  future pointing lightlike geodesic of $(S\times \R,g)$ with non-constant component $\gamma$.  
\end{theorem}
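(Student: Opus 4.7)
The plan is to split the equivalence into three pieces that correspond to the three requirements on $z$: it is lightlike, it is future pointing, and it is a geodesic.

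First I would verify the algebraic identities. For an admissible $\gamma$, the definition $\dot t(s)=K(\dot\gamma(s))$ together with \eqref{g} gives
\[
g(\dot z,\dot z)=g_0(\dot\gamma,\dot\gamma)+2\omega(\dot\gamma)\dot t
=g_0(\dot\gamma,\dot\gamma)+2\omega(\dot\gamma)\cdot\left(-\tfrac{g_0(\dot\gamma,\dot\gamma)}{2\omega(\dot\gamma)}\right)=0,
\]
so $\dot z$ is null, and $\dot t=K(\dot\gamma)>0$ together with the time orientation by $-\nabla t$ makes $\dot z$ future pointing. Conversely, if $z=(\gamma,t)$ is a future pointing lightlike curve with non-constant $\gamma$, then solving $g(\dot z,\dot z)=0$ for $\dot t$ recovers $\dot t=K(\dot\gamma)$ up to an additive constant in $t$, so the specific form of $t(s)$ in the statement just fixes a normalization.

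Second, I would invoke Noether's conservation law associated to the Killing field $\partial_t$, already pointed out just before the statement: along any $g$-geodesic $z$, the quantity $g(\partial_t,\dot z)=\omega(\dot\gamma)$ is constant, call it $c$. This immediately gives the ``$\omega(\dot\gamma)=\mathrm{const}$'' part of one direction, and allows me to rule out the degenerate cases: if $c=0$ the lightlike condition forces $g_0(\dot\gamma,\dot\gamma)\equiv 0$, hence $\dot\gamma\equiv 0$, contradicting non-constancy, whereas $c>0$ contradicts the description of the future lightlike cone in terms of $\mathcal A$ recalled in the Introduction. So $c<0$, i.e.\ $\gamma$ is admissible with constant $\omega(\dot\gamma)$.

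Third, I would match the projection of the $g$-geodesic equation to $S$ with the pregeodesic equation of $K$. In coordinates $(x^i,t)$, the Lagrangian $L_g=\tfrac12(g_0)_{ij}\dot x^i\dot x^j+\omega_i\dot x^i\dot t$ produces two Euler-Lagrange systems: the $t$-equation reproduces $\omega(\dot\gamma)=c$, and the $x$-equations, after substitution of $\dot t=K(\dot\gamma)$ and $\omega(\dot\gamma)=c$, coincide (up to a scalar factor absorbed by reparametrization) with the Euler-Lagrange equations of $\tfrac12 K^2$ on $\mathcal A$. The conic Finsler setting of Section \ref{pre} guarantees that critical points of the $K$-energy are smooth curves parametrized with $K(\dot\gamma)=\mathrm{const}$, which is precisely what the ansatz $\dot t=K(\dot\gamma)$ enforces once $\omega(\dot\gamma)$ is constant. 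A cleaner dual viewpoint is Fermat's principle: future pointing lightlike geodesics of $g$ from $(x_0,t_0)$ to the fiber $\{x_1\}\times\R$ are critical points of the arrival time $\int_0^1\!K(\dot\gamma)\,\de s$ on $\Omega_{x_0x_1}(\mathcal A)$, and these are by definition the pregeodesics of $(S,K)$ with constant $\omega(\dot\gamma)<0$.

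The main technical difficulty is the last step: reconciling the fact that $K$-pregeodesics are only defined up to reparametrization, whereas $g$-geodesics carry an essentially unique affine parameter. The bridge is the observation that the conservation $\omega(\dot\gamma)=c$ is equivalent, through $\dot t=K(\dot\gamma)$ and the null relation, to $K(\dot\gamma)$ being constant; this is exactly the parametrization $K$-geodesics (as opposed to mere pregeodesics) are forced to have, so the two equations match on the nose without spurious factors. The remaining care is that $K$ is only conic: variations of $\gamma$ must stay inside the open set $\mathcal A$, but this is automatic along the smooth curves involved in the statement.
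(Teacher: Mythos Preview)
The paper does not actually prove this theorem; it is quoted from \cite{CaJaSa14} (Corollary~5.6~(i) there) with only the remark that, since $\partial_t$ is Killing, $g(\partial_t,\dot z)=\omega(\dot x)$ is conserved.  So there is no ``paper's own proof'' to compare against beyond that observation.  That said, your overall architecture (lightlike $\Leftrightarrow$ $\dot t=K(\dot\gamma)$; future pointing $\Leftrightarrow$ $\dot t>0$; Noether's law for $\partial_t$ giving $\omega(\dot\gamma)=c$ and ruling out $c\geq 0$; then matching the projected Euler--Lagrange equations) is exactly the intended route.

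There is, however, a genuine error in your ``bridge'' paragraph.  You claim that, via $\dot t=K(\dot\gamma)$ and the null relation, the conservation $\omega(\dot\gamma)=c$ is \emph{equivalent} to $K(\dot\gamma)$ being constant, and hence that $\gamma$ is already a $K$-geodesic ``on the nose''.  This is false.  The null relation $g_0(\dot\gamma,\dot\gamma)+2\omega(\dot\gamma)\dot t=0$ together with $\dot t=K(\dot\gamma)$ is just the definition of $K$ and imposes no extra constraint; and from $\omega(\dot\gamma)=c$ one only gets $K(\dot\gamma)=-g_0(\dot\gamma,\dot\gamma)/(2c)$, which is constant iff $g_0(\dot\gamma,\dot\gamma)$ is.  For an affinely parametrized lightlike $g$-geodesic this need not hold: the second equation in \eqref{geoKrop} contains an $\omega^\sharp\ddot t$ term, and nothing forces $\ddot t\equiv 0$.  (A quick check with $g_0$ flat and $\omega=-e^{y}\,\de x$ on $\R^2$ already produces a nontrivial $t$-geodesic equation $\ddot t+e^{-y}\dot x\dot y+\dot y\,\dot t=0$.)  This is precisely why the statement says \emph{pregeodesic} with $\omega(\dot\gamma)=\mathrm{const}$, not \emph{geodesic} with $K(\dot\gamma)=\mathrm{const}$: the two parametrizations differ in general.

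So step~3 cannot be dismissed; you must actually show that the $x$-part of \eqref{geoKrop}, together with $\omega(\dot\gamma)=c$ and $\dot t=K(\dot\gamma)$, coincides with the Euler--Lagrange equation of the Kropina \emph{length} $\int K(\dot\gamma)\,\de s$ (equivalently, of the energy $\tfrac12\int K^2(\dot\gamma)\,\de s$ after the reparametrization that makes $K(\dot\gamma)$ constant).  The Fermat-principle formulation you mention is the cleanest way to do this: critical points of the arrival time $L(\gamma)=\int K(\dot\gamma)\,\de s$ on $\Omega_{x_0x_1}(\mathcal A)$ are exactly the projections of lightlike $g$-geodesics to the flow lines of $\partial_t$, and critical points of a Finsler length are pregeodesics, not geodesics.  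Once you drop the incorrect equivalence and phrase the conclusion as ``$\gamma$ is a pregeodesic (with the $\omega(\dot\gamma)=c$ parametrization)'', the argument goes through.
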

Observe that for a given $t_0\in\R$, $z:[a,b]\to S\times\R$ connects the points $(\gamma(a),t_0)$ and  $(\gamma(b), t_0+L(\gamma))$, where $L(\gamma)$ is the Kropina length of $\gamma$, i.e.  
\beq\label{LKrop}
L(\gamma):=\int_a^b K(\dot\gamma)ds.\eeq 
In particular, by \cite[Theorem 5.5 (i)]{CaJaSa14}, if $\gamma\colon [0,T]\to S$ is a non-trivial unit (i.e. $K(\dot\gamma)=\mathrm{const.}=1$) closed geodesic of $(S,K)$ then  $z(t)=(\gamma(t), t)$ is a future pointing lightlike pregeodesic such that its  component $\gamma$ is periodic with period  $T=L(\gamma)$, $z(0)=(\gamma(0), 0)$ and $z(T)=(\gamma(0),T)$.
\bere
When considering lightlike geodesics connecting a point $(x_0, t_0)$ to a flow line of $\partial_t$ passing through a point different from $(x_0, t_0)$, Theorem~\ref{fermatprinciple} can be interpreted as a version, for  spacetimes of the type $(S\times\R,g)$, $g$ as in \eqref{g}, of the Fermat's principle in general relativity, stating that lightlike geodesics connecting a point to a timelike curve $\tau$ are the critical points of the arrival time at the curve $\tau$ (see \cite{Kovner90, Perlic90}). Indeed, in the  class of spacetimes we are considering,  the arrival time of a future pointing lightlike curve $z(s)=\big(\gamma(s),t(s)\big)$ connecting a point $(x_0,t_0)$ to an integral curve of the field $\partial_t$  is $T(\gamma)=t_0+L(\gamma)$. 
We point out that here $\tau$, which is  an integral line of $\partial_t$, is not timelike but lightlike.
\ere

\section{The approximation scheme with standard stationary spacetimes}\label{approx}
In this section we fix a Kropina space $(S,K)$ with a Riemannian metric $g_0$ and a never vanishing one-form $\omega$ on the manifold $S$.
For $\eps>0$, let us consider the standard stationary spacetime $(S\times \R, g_{\eps})$, where
\beq\label{epstationary} g_\eps= \pi^*g_0+\pi^*\omega\otimes\de t+\de t\otimes\pi^*\omega -\eps\de t^2.\eeq
Notice that for each $\eps>0$, $\partial_t$ is a timelike Killing vector field for $g_\eps$.
Let $(x_0,t_0)\in S\times\R$ and $\tau(t)=(x_1,t)$ be the integral line of $\partial_t$ passing through the point $(x_1,0)$.
Let $\nabla$ be the Levi-Civita connection of the Riemannian metric $g_0$. Let us denote by $d_0$ the distance induced on $S$ by $g_0$.  Moreover, let $\|\cdot\|_{x}$ be the norm on the space of linear operators on $T_xS$ endowed with the norm associated to the scalar product $(g_0)_x$, $x\in S$. 

The geodesic equations for $(S\times\R, g_\eps)$ are the following
\beq\label{geotime}
\begin{cases}
	\omega(\dot x_\eps)-\eps \dot t_\eps=c_\eps\\
	\nabla_{\dot x_\eps}\dot x_\eps=\dot t_{\eps}\Omega^\sharp(\dot x_\eps)-\omega^\sharp\ddot t_\eps
\end{cases}
\eeq
while those of $(S\times\R, g)$ are 

\beq\label{geoKrop}\begin{cases}
	\omega(\dot x)=c_0\\
	\nabla_{\dot x}\dot x=\dot t\Omega^\sharp(\dot x)-\omega^\sharp\ddot t
\end{cases}
\eeq
where $c_\eps, c_0\in \R$ and $\omega^\sharp$ and $\Omega^\sharp$ are  the vector field and the $(1,1)$-tensor field $g_0$-metrically equivalent respectively to $\omega$ and $\Omega=\de \omega$. 

The geodesic equations \eqref{geotime} for the standard stationary spacetime $g_\epsilon$ and \eqref{geoKrop} for the metric $g$ can be obtained as the Euler-Lagrange equations of the respective energy functionals
\[
I_\epsilon (x,t) = \frac 1 2\int_0^1g_\epsilon(\dot z,\dot z)\de s = 
\frac 1 2 \int_0^1\left[g_0(\dot x, \dot x) + 2\omega(\dot x)\dot t - \epsilon \dot t^2\right]\de s,
\]
\beq\label{I0}
I_0(x,t) = \frac 1 2 \int_0^1g(\dot z,\dot z)\de s
=\frac 1 2\int_0^1\left[g_0(\dot x, \dot x) + 2\omega(\dot x)\dot t\right]\de s.
\eeq
We point out that the constants $c_\epsilon$ and $c_0$ respectively in \eqref{geotime} and \eqref{geoKrop} derive from the fact that $\partial_t$ is a Killing vector field both for $g_\epsilon$ and $g$, and so it gives rise to the conservation laws $g_\epsilon(\dot z_\epsilon,  \partial_t) = c_\epsilon$ and 
$g(\dot z, \partial_t) = c_0$, where $z_\epsilon = (x_\epsilon,t_\epsilon)$ and $z = (x,t)$ are geodesics respectively for the metric $g_\epsilon$ and $g$.

Let us recall now  the following:
\begin{theorem}[Fermat principle in standard stationary spacetimes \cite{CaJaMa11}]\label{fermat} 
	A curve  $z_\eps\colon[0,1]\to S\times\R$, $z_\eps(s)=(x_\eps(s),t_\eps(s))$ is a future pointing lightlike geodesic of $(S\times\R, g_\eps)$ 
	if and only if $x_\eps$ is a pregeodesic of the Randers  metric on $S$ 
	\[
	F_\eps(v):=\frac{1}{\eps}\left(\sqrt{\eps g_0(v,v)+\omega^2(v)}+\omega(v)\right),\]
	parametrized with $\eps g_0(\dot x_\eps,\dot x_\eps)+\omega^2(\dot x_\eps)=\mathrm{const.}$ and 
	\beq\label{tepss}
	t_\eps(s)=t_0+\int_0^sF_\eps(\dot x_\eps(r))\de r.\eeq
\end{theorem}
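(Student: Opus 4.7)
The plan is to invert the lightlike constraint $g_\eps(\dot z_\eps,\dot z_\eps)=0$ to eliminate the $t$-coordinate, read the parametrization off the Killing conservation law, and identify the reduced spatial equation with the Euler--Lagrange equation of the Randers length $\int F_\eps(\dot x)\de s$. I would proceed in three steps; the converse direction then follows by reversing the reduction.

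\emph{Step 1 (the lightlike constraint determines $t_\eps$).} Expanding $g_\eps(\dot z_\eps,\dot z_\eps)=0$ with $z_\eps=(x_\eps,t_\eps)$ gives the scalar quadratic
\[
\eps\,\dot t_\eps^{\,2}-2\omega(\dot x_\eps)\,\dot t_\eps-g_0(\dot x_\eps,\dot x_\eps)=0
\]
in $\dot t_\eps$. Since $\partial_t$ is timelike future-pointing for $g_\eps$, a future-pointing lightlike $\dot z_\eps$ must satisfy $g_\eps(\dot z_\eps,\partial_t)=\omega(\dot x_\eps)-\eps\dot t_\eps<0$, which forces the $+$ sign in the quadratic formula, so $\dot t_\eps=F_\eps(\dot x_\eps)$. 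Integration then gives \eqref{tepss}. Observe that for $\eps>0$ the radical $\eps g_0(v,v)+\omega^2(v)$ is strictly positive on $TS\setminus\{0\}$, so $F_\eps$ is a genuine Randers metric on all of $TS$.

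\emph{Step 2 (Killing conservation yields the parametrization).} Because $\partial_t$ is Killing for $g_\eps$, $c_\eps:=g_\eps(\dot z_\eps,\partial_t)=\omega(\dot x_\eps)-\eps\dot t_\eps$ is conserved along every $g_\eps$-geodesic. Substituting $\dot t_\eps=F_\eps(\dot x_\eps)$ from Step 1 yields
\[
c_\eps=-\sqrt{\eps g_0(\dot x_\eps,\dot x_\eps)+\omega^2(\dot x_\eps)},
\]
so conservation of $c_\eps$ is equivalent to the stated parametrization $\eps g_0(\dot x_\eps,\dot x_\eps)+\omega^2(\dot x_\eps)=\mathrm{const.}$

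\emph{Step 3 (spatial equation $\Leftrightarrow$ Randers pregeodesic equation).} The heart of the proof is the variational identification between the spatial projection of \eqref{geotime} and the Euler--Lagrange equation of the Randers length $\mathcal L_\eps(x)=\int_0^1 F_\eps(\dot x)\de s$. Restrict $I_\eps$ to the submanifold of curves $(x,t)$ with $t$ given by the arrival-time formula \eqref{tepss}; a spatial variation $x\mapsto x_s$ with $t$ deformed through the constraint produces a first variation of $I_\eps$ that matches, up to boundary terms vanishing on admissible variations, the first variation of $\mathcal L_\eps$ at $x_\eps$. Hence $z_\eps$ solves the second equation of \eqref{geotime} if and only if $x_\eps$ is a critical point of $\mathcal L_\eps$, i.e.\ a pregeodesic of $F_\eps$. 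Conversely, given such a pregeodesic parametrized as in Step 2, defining $t_\eps$ by \eqref{tepss} produces a future-pointing lightlike $z_\eps$ that solves \eqref{geotime}. The main technical obstacle is exactly this equivalence: one must verify that the lightlike-constraint submanifold is smooth, that the first variations of $I_\eps$ and $\mathcal L_\eps$ coincide on it, and that critical points of $\mathcal L_\eps$ lift to actual $g_\eps$-geodesics rather than merely to spatial projections of critical points. All of these verifications are carried out in \cite{CaJaMa11} using the positive-definiteness of the Randers fundamental tensor together with the implicit function theorem applied to the lightlike constraint.
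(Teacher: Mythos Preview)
The paper does not supply its own proof of this theorem: it is quoted verbatim as a result of \cite{CaJaMa11} and used as a black box in the approximation scheme of Section~\ref{approx}. So there is no ``paper's proof'' to compare against; your sketch is in fact a faithful outline of the argument that \cite{CaJaMa11} carries out. Steps~1 and~2 are exactly the computations one does (and they are correct as written), and Step~3 correctly identifies the nontrivial part---the equivalence between the spatial geodesic equation and the Euler--Lagrange equation of $\mathcal L_\eps$---while deferring the verification to the cited reference, which is appropriate here.

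One small comment: in Step~3 you phrase the reduction as restricting $I_\eps$ to the lightlike-constraint submanifold and comparing first variations. That is one way to organize it, but the cleaner route (and the one closer to \cite{CaJaMa11}) is to observe directly that the arrival-time functional $T(x)=t_0+\mathcal L_\eps(x)$ on $\Lambda_{x_0x_1}(S)$ has the same critical points as $\mathcal L_\eps$, and that a future-pointing lightlike curve $(x,t)$ with $t$ given by \eqref{tepss} is a $g_\eps$-geodesic if and only if $x$ is a critical point of the arrival time among lightlike curves joining the point to the flow line of $\partial_t$; this is the Fermat-principle formulation and avoids having to discuss smoothness of the constraint submanifold separately.
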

From Theorem~\ref{fermat}, for a future pointing lightlike geodesic  of $(S\times\R, g_\eps)$, 
we have 
\beq\label{teps}
t_\eps(1)=t_0+L_\eps(x_\eps),\eeq 
where $L_\eps(x_\eps)$ is the length of $x_\eps$ w.r.t. the Randers metric $F_\eps$, 
\beq\label{Leps}
L_\eps(x_\eps)=\frac{1}{\eps}\int_0^1\left(\sqrt{\eps g_0(\dot x_\eps,\dot x_\eps)+\omega^2(\dot x_\eps)}+\omega(\dot x_\eps)\right)\de r.
\eeq

In the next lemma we give a condition ensuring that  a family  of future pointing lightlike geodesics  $(x_\eps,t_\eps)$ converges uniformly 
%da correggere nella versione publlicata in the $H^1$-topology 
to a future pointing lightlike geodesic of $(S\times\R, g)$. Let $H^1([0,1],\R)$ be the Sobolev space of absolute continuous functions on $[0,1]$ with derivative in $L^2$. Moreover let $H^1_0([0,1],\R)$ be the subspace of $H^1([0,1],\R)$ consisting of functions $\tau(s)$ such that $\tau(0) = \tau(1) = 0$.
Henceforth, we denote by $\mathcal P(S)$ both the Sobolev manifolds $\Lambda(S)$ of  $H^1$ free loops on $S$ or $\Lambda_{pq}(S)$ of $H^1$ paths between two points $p$ and $q$ (possibly equal) in $S$.

\bl\label{deltabounded}
Assume that the Riemannian manifold $(S,g_0)$ is complete and there exists a point $\bar x$ and a positive constant $C_{\bar x}$ such that  $\|\omega\|_x\leq C_{\bar x}(d_0(x,\bar x)+1)$.
For each $\eps\in(0,1)$, let $z_\eps=(x_\eps,t_\eps):[0,1]\to S\times \R$ be a future pointing lightlike geodesic of $(S\times\R, g_\eps)$. Let $\Delta_\eps:= t_\eps(1)-t_\eps(0)$ and let us assume that 
$\Delta:= \sup_{\eps\in (0,1)}\Delta_\eps \in\R$.
Then there exists a sequence $\eps_n\to 0$ such that $(x_{\eps_n},t_{\eps_n})$ uniformly converges 
to a curve $z=(x,t)\in\mathcal P(S)\times H^1([0,1],\R)$ which is 
%correggere almeno nel lavoro su arxiv
 a future pointing lightlike  geodesic   of $(S\times\R,g)$.
\el
\begin{proof}
	
	By \cite[Theorem 3.3.2 and Eqs. (3.5) and (3.17)]{Masiel94}, for each $\eps>0$, $x_\eps$ is  a critical point of the following functional $J_\eps$ defined on  $\mathcal P(S)$, 
	\[
	J_\eps(x)=\frac{1}{2}\inte g_0(\dot x,\dot x)\de s+\frac{1}{2\eps}\inte \omega^2(\dot x)\de s-\frac{\eps}{2}\left(\Delta_\eps-\frac{1}{\eps}\inte\omega(\dot x)\de s\right)^2\]
	and  $J_\eps(x_\eps)=0$, for all $\eps>0$. 
		Then 
	\[
	\inte g_0(\dot x_\eps,\dot x_\eps)\de s\leq \eps\Delta_\eps^2+2\Delta_\eps\inte|\omega(\dot x_\eps)|\de s
	\leq \Delta^2+2\Delta\inte|\omega(\dot x_\eps)|\de s
	\]
	and, as in  \cite[Lemma 2.6]{BaCaFl06}, we obtain that 
	the family of curves  $(x_\eps)_{\eps\in (0,1)}$ is bounded in $\mathcal P(S)$.
	Moreover, from the second equation in \eqref{geotime}, using the fact that $\Delta_\epsilon$ is bounded, as in Lemma~6.2 in \cite{BaCaFl17} we get that 
	the family  $(\dot t_\eps)_{\eps\in(0,1)}$ is also bounded in $L^2([0,1],\R)$.
	
	Now, for each $\eps$, $z_\eps$ is a critical point of the energy functional $I_\eps$ of the Lorentzian metric $g_\eps$,  i.e.
	\bml\label{critical}\inte \big(g_0(\dot x_\eps, \nabla_{\dot x_\eps}\xi_\eps)+ \dot t_\eps g_0(\nabla_{\xi_\eps}\omega^\sharp, \dot x_\eps)+\dot t_\eps g_0(\omega^\sharp, \nabla_{\dot x_\eps} \xi_\eps)\big )\de s\\ +\inte \omega(\dot x_\eps)\dot \tau\de s
	-\eps \inte\dot t_\eps\dot\tau\de s =0
	\eml
	for any  variational vector field $\xi_\eps$, i.e. $\xi_\eps\in T_{x_\eps}\mathcal P(S)$,  and for any $H^1_0([0,1],\R)$ function $\tau$.
	As $(\dot t_\eps)_{\eps\in(0,1)}$ is bounded in $L^2([0,1],\R)$, we get $\eps\inte \dot t_\eps \dot \tau \to 0$ as $\eps\to 0$.
	Then, as in \cite[Lemma 3.4.1]{Masiel94}, there exists a sequence $\eps_n\to 0$ such that $(x_{\eps_n})$ strongly converges to $x\in \mathcal P(S)$ as $n\to\infty$. Taking in \eqref{critical} $\xi_{\eps_n}=0$, for each $\eps_n$, we get
	$\inte \omega(\dot x_{\eps_n})\dot \tau-\eps_n\inte\dot t_{\eps_n} \dot \tau\de s =0$ and then passing to the limit on $n$, $\inte \omega(\dot x)\dot \tau\de s=0$. Therefore, $\omega(\dot x)$ is constant. Now let $t$ be the weak limit in $H^1([0,1],\R)$ of a sequence $t_{\eps_n}$, $\eps_n\in (0,1)$, $\eps_n\to 0$ as $n\to +\infty$. Then, since $x_{\eps_n}$ strongly converges to $x$,  
	the curve $z(s)=(x(s), t(s))$ satisfies 
	\beq\label{criticalx}
	\inte \big(g_0(\dot x, \nabla_{\dot x}\xi)+ \dot t g_0(\nabla_{\xi}\omega^\sharp, \dot x)+\dot t g_0(\omega^\sharp, \nabla_{\dot x} \xi)\big )\de s=0,
	\eeq
	for any $\xi\in T_x\mathcal P(S)$. Therefore,   $z$ is a critical point of the energy functional 
	$I_0$ in in \eqref{I0}
	of the Lorentzian metric $g$ defined on $\mathcal P(S)\times \left (\{t\}+H^1_0([0,1],\R)\right)$. 
	
	By considering any $H^1$ variational vector field along $z$ with compact support in a neighbourhood $J$ contained in $(0,1)$  of any  instant $s_0\in (0,1)$, and decomposing it in its   components in $x_{|J}^*(TS)$ and $H^1(J, \R)$, this property of $z$ remains true locally. Therefore, as the Lagrangian $(p,v)\in T(S\times\R)\mapsto g_p(v,v)$ is regular,  it can be proved that $z$ is smooth (see, e.g. \cite[p. 609-610]{AbbSch09}) and, therefore, it is a geodesic of $(S\times\R, g)$. (Notice also that if $\mathcal P(S)=\Lambda(S)$, as $x$ satisfies \eqref{criticalx} it must  be a smooth $1$-periodic curve).

	Since $I_{\eps_n}(z_{\eps_n})\to I_0(z)$, as $n \to \infty$, and $I_\eps(z_\eps)=0$, we have that $z$ is lightlike. Finally,  $z$ is future pointing if, by definition, $\dot t(s)>0$ for all $s\in[0,1]$. Notice that  $\dot t(s)$ cannot vanish at some $s\in [0,1]$ because $z$ is lightlike. Hence, $\dot t$ cannot be negative otherwise $0>t(1)-t(0)=\lim_{n\to \infty}\big(t_{\eps_n}(1)-t_{\eps_n}(0)\big)\geq 0$.
\end{proof}
\bere\label{admissible}
Notice that if $\mathcal P(S)=\Lambda(S)$ and  the limit curve $z(s)=(x(s),t(s))$ has component $x$ which is not constant then $-\omega(\dot x)>0$, i.e. $x$ is admissible.  This comes from the fact that, being $z_{\eps_n}$ future pointing in $(S\times\R, g_{\eps_n})$, $0>g_{\eps_n}(\dot z_{\eps_n}, \partial_t)=\omega(\dot x_{\eps_n})-\eps_n \dot t_{\eps_n}=\int_0^1\big(\omega(\dot x_{\eps_n})-\eps_n \dot t_{\eps_n})\de s\to \int_0^1\omega(\dot x)\de s=\omega(\dot x)$, as $n\to\infty$, and the constant $\omega(\dot x)$ cannot be $0$ otherwise, as $z$ is lightlike in $(S\times\R, g)$, we would have   $g_0(\dot x,\dot x)=\mathrm{const.}=0$.   
\ere

\section{The existence of geodesics connecting two points}\label{2points}
We refer to \cite{O'neill} for standard notations and notions  about causality as, e.g.,  the subsets $I^+((x_0,t_0))$ and $J^+((x_0,t_0))$ which represent the set of points in a spacetime $(S\times \R, h)$ which can be connected to $(x_0,t_0)$ by a future-pointing timelike or, respectively,   causal curve.

\bpr\label{geoconnect}
Let $\left(S, -\frac{g_0(\cdot,\cdot)}{2\omega(\cdot)}\right)$ be a Kropina space and $x_0, \ x_1$ be two points on $S$ such that $x_0\neq x_1$ and $\Omega_{x_0x_1}(\mathcal A)\neq \emptyset$. Assume that the Riemannian manifold $(S,g_0)$ is complete and there exists a point $\bar x$ and a positive constant $C_{\bar x}$ such that  $\|\omega\|_x\leq C_{\bar x}(d_0(x,\bar x)+1)$. 
Then there exists a geodesic $\gamma$ of the Kropina space connecting $x_0$ to $x_1$ and  which is a global minimizer of the Kropina length functional on 
$\Omega_{x_0x_1}(\mathcal A)$.
\epr
\begin{proof}
Since $g_0$ is complete and 
$\|\omega\|_x\leq C_{\bar x}(d_0(x_0,x) +1)$, from \cite[Proposition 3.1 and Corollary 3.4]{Sanche97} the spacetimes $(S\times\R, g_\eps)$ are globally hyperbolic, for each $\eps>0$, with Cauchy hypersurfaces $S\times\{t\}$, $t\in\R$. As for any vector $w\in TS\times\R$, $g(w,w)\leq 0$ implies  $g_\eps(w,w)<0$,  also $(S\times\R, g)$ is globally hyperbolic with Cauchy hypersurfaces $S\times\{t\}$, $t\in\R$.

Let $\Upsilon=\{t\in (0,+\infty): (x_1,t_0+t)\in J^+((x_0,t_0))\}$.  Now let $\gamma_0\in \Omega_{x_0x_1}(\mathcal A)$ and consider the curve $z(s)=(\gamma_0(s), t(s))$, with $t(s)=t_0+\int_0^s K(\dot\gamma_0)\de r$, which is lightlike and future pointing in $(S\times\R, g)$. 
Then $L(\gamma_0)\in \Upsilon$, i.e $\Upsilon\neq \emptyset$. Let $T=\inf \Upsilon$. The point $(x_1, t_0+T)$ clearly belongs to $J^+((x_0,t_0))\setminus I^+((x_0,t_0))$ hence there exists a future pointing lightlike geodesic $z(x)=(\gamma(s), t(s))$ connecting $(x_0,t_0)$ to $(x_1, T)$ (see \cite[Proposition 10.46]{O'neill}). By Theorem~\ref{fermatprinciple}, the projection $\gamma$ is a pregeodesic of $(S,K)$ which connects $x_0$ to $x_1$ and minimize the Kropina length functional. 
\end{proof}
\bere
As a globally hyperbolic spacetime is causally simple, Proposition~\ref{geoconnect} can be deduced by  \cite[Theorem 4.9 (i)]{CaJaSa14} which concerns the more general case of a spacetime $S\times \R$ where  $\partial_t$ is a causal Killing vector field.  For a related result see \cite[Proposition 3.22]{JavSan17}. 
\ere

Following \cite[Section 5]{ChMaMM19}, we know that the condition of the existence of an admissible curve between two points in $S$ is ensured provided a non integrability condition for the kernel distribution $\mathcal N$ of $\omega$ is satisfied. Precisely, if $\omega\wedge\de \omega\neq 0 $ in a connected, dense subset of $S$ then there exists a smooth admissible curve between any two points $p$ and $q$ in $S$. Hence we get the following result about geodesic connectedness of a Kropina space that extends \cite[Theorem 1.5]{ChMaMM19} valid in the compact case:
\begin{corollary}\label{geoconn}
Let $(S, g_0)$ be a complete Riemannian manifold	and $\omega$ be a nowhere vanishing one-form such that  there exists a point $\bar x$ and a positive constant $C_{\bar x}$ with   $\|\omega\|_x\leq C_{\bar x}(d_0(x,\bar x)+1)$ and $\omega\wedge\de \omega\neq 0 $ in a connected, dense subset of $S$.
Then the Kropina space $\left(S, -\frac{g_0(\cdot,\cdot)}{2\omega(\cdot)}\right)$ is geodesically connected.
\end{corollary}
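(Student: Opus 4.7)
The plan is to reduce the statement to Proposition~\ref{geoconnect} by verifying its only remaining hypothesis, namely the non-emptiness of $\Omega_{x_0x_1}(\mathcal{A})$ for every pair of distinct points $x_0, x_1 \in S$. Indeed, the completeness of $(S,g_0)$ and the linear growth bound $\|\omega\|_x \leq C_{\bar x}(d_0(x,\bar x)+1)$ are already assumed in the corollary, so once we have an admissible connecting curve between arbitrary $x_0\neq x_1$, Proposition~\ref{geoconnect} directly yields a geodesic of the Kropina space, proving geodesic connectedness.

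To produce the admissible curve, I would invoke the result from \cite[Section 5]{ChMaMM19} quoted in the paragraph preceding the corollary: whenever $\omega\wedge d\omega\neq 0$ on a connected dense subset of $S$, any two points of $S$ can be joined by a smooth curve $\gamma$ with $\omega(\dot\gamma)<0$ everywhere, that is $\dot\gamma(s)\in\mathcal{A}_{\gamma(s)}$ for all $s$. This provides the required element of $\Omega_{x_0x_1}(\mathcal{A})$ and closes the argument for $x_0\neq x_1$. The case $x_0=x_1$ is trivial since a point is always joined to itself by a (constant, degenerate) geodesic, and in any case the non-integrability condition furnishes admissible loops on which Proposition~\ref{geoconnect} could again be applied after splitting at an intermediate point.

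No genuine obstacle arises here beyond correctly combining the cited horizontal-accessibility statement with Proposition~\ref{geoconnect}; the only subtle point is checking that the admissible curve produced by the non-integrability hypothesis is piecewise smooth with $\dot\gamma^\pm\in\mathcal{A}$, which is exactly how the result of \cite{ChMaMM19} is formulated, so this verification is immediate. The corollary thus follows by a direct concatenation of the two results, extending \cite[Theorem~1.5]{ChMaMM19} from the compact setting to the complete Riemannian setting with the prescribed linear growth on $\omega$.
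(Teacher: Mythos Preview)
Your proposal is correct and follows essentially the same route as the paper: the corollary is presented there as an immediate consequence of the cited result from \cite{ChMaMM19} (the non-integrability condition $\omega\wedge d\omega\neq 0$ on a connected dense set yields an admissible curve between any two points) combined with Proposition~\ref{geoconnect}. Your additional remarks on the case $x_0=x_1$ and on the regularity of the admissible curve are harmless elaborations of a proof the paper leaves implicit.
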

%We also point out that in \cite[Theorem 6.3]{SaShYo17} the authors prove 
%geodesic connectedness of a forward complete Kropina space defined by Zermelo's navigation data $g_0$ and $W$ when $W$ is a Killing vector field for $g_0$ with closed orbits. Corollary~\ref{corollary} widely extends that result provided that the points $x_0$ and $x_1$ are connected by an admissible curve.
%\ere

A multiplicity result holds if the fundamental group of $S$ is non-trivial. This is based of the observation that  the sequence of the lengths $\Delta_\eps\colon t_\eps(1)-t_\eps(0)$ (or equivalently of the travel times as measured by  observers at infinity in the standard stationary spacetimes  $(S\times\R, g_\eps)$), 
of  the  geodesics $x_\eps$ in Lemma~\ref{deltabounded} can be controlled from above provided these  geodesics  minimize the $F_\eps$-length  in some  homotopy class $\mathcal C\subset \Lambda_{x_0x_1}(S)$ containing at least one admissible curve $\gamma$.

\bt\label{multiplegeoconnect}
Let $\left(S, -\frac{g_0(\cdot,\cdot)}{2\omega(\cdot)}\right)$ be a Kropina space and $x_0, \ x_1$ be two points on $S$, $x_0\neq x_1$. Assume that the Riemannian manifold $(S,g_0)$ is complete and there exists a point $\bar x$ and a positive constant $C_{\bar x}$ such that  $\|\omega\|_x\leq C_{\bar x}(d_0(x,\bar x)+1)$. 
Then for  each connected component $\mathcal C$ of $\Lambda_{x_0x_1}(S)$ there exists a geodesic of the Kropina space from  $x_0$ to $x_1$, which is a  minimizer of the Kropina length functional on $\mathcal C$, provided that there exists an admissible curve $\gamma\in \mathcal C$.  Moreover, if $\mathcal C$ corresponds to a non-trivial element of the fundamental group of $S$, a geodesic loop in $\mathcal C$ exists when  $x_0$ and $x_1$ coincide. 
\et
\begin{proof}
Let  $x_\eps$ be a  geodesic of $(S, F_\epsilon)$ with minimal $F_\eps$-length in  $\mathcal C$. From  \eqref{teps} we get
\[
\Delta_\eps:=L_\eps(x_\eps)\leq L_\eps(\gamma),\]
for each $\eps>0$, where $L_\epsilon$ is defined at \eqref{Leps}.
Since $\gamma\in \mathcal C$ is admissible we obtain
\beq L_\eps(\gamma)=\frac{1}{\eps}\int_0^1-\omega(\dot\gamma)\left(\sqrt{ \frac{\eps g_0(\dot\gamma,\dot\gamma)}{\omega^2(\dot \gamma)}+1}-1\right)\de r \leq \frac{1}{\eps}\int_0^1\frac{1}{2}\frac{\eps g_0(\dot\gamma,\dot\gamma)}{-\omega(\dot \gamma)}\de r=L(\gamma),\label{tbounded}
\eeq
for all $\eps >0$, where $L$ is defined at \eqref{LKrop}. Thus, $\sup_{\eps\in(0,1)}\Delta_\eps\in\R$. After parametrizing $x_\eps$ with $\eps g_0(\dot x_\eps,\dot x_\eps)+\omega^2(\dot x_\eps)=\mathrm{const.}$ (recall  Theorem~\ref{fermat}), a sequence $\eps_n\to 0$ exists such that  the curves $(x_{\eps_n},t_{\eps_n})$  converge to a future pointing lightlike geodesic $(x,t)$ of $(S,g)$ by Lemma~\ref{deltabounded}. Therefore, by  Remark~\ref{admissible}, $x$ is admissible and then by Theorem~\ref{fermatprinciple}, it is a pregeodesic of $(S,K)$. In order to show that it minimizes the length in $\mathcal C$, let us assume that there exists an admissible curve $\gamma_1\in\mathcal C$ such that $L(\gamma_1)<L(x)$. From \eqref{tbounded}, $L_{\eps_n}(x_{\eps_n})\leq L_{\eps_n}(\gamma_1)\leq L(\gamma_1)$; as  $L_{\eps_n}(x_{\eps_n})=t_{\eps_n}(1)-t_{\eps_n}(0)\to t(1)-t(0)=L(x)$ we get a contradiction.
\end{proof}
\begin{corollary}\label{cormult}
Under the assumption of Theorem~\ref{multiplegeoconnect}, assume also that $\omega\wedge\de \omega\neq 0 $ in a connected, dense subset of $S$.
Then for  each connected component $\mathcal C$ of $\Lambda_{x_0x_1}(S)$ there exists a geodesic of the Kropina space from  $x_0$ to $x_1$  which is a  minimizer of the Kropina length functional on $\mathcal C$.  Moreover, if $\mathcal C$ corresponds to a non-trivial element of the fundamental group of $S$, a geodesic loop in $\mathcal C$ exists when  $x_0$ and $x_1$ coincide. 
\end{corollary}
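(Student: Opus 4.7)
The plan is to deduce Corollary~\ref{cormult} directly from Theorem~\ref{multiplegeoconnect}. Since the two statements differ only by the additional hypothesis $\omega\wedge\de\omega\ne 0$ on a connected, dense subset of $S$, it suffices to verify that this assumption forces every connected component $\mathcal C$ of $\Lambda_{x_0x_1}(S)$ to contain at least one admissible curve. Theorem~\ref{multiplegeoconnect} then produces the minimizing Kropina pregeodesic in $\mathcal C$, and when $x_0=x_1$ and $\mathcal C$ corresponds to a nontrivial element of $\pi_1(S)$, the second assertion of that theorem upgrades the minimizer to a nonconstant geodesic loop.

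The intermediate claim I would establish is that smooth admissible curves are $C^0$-dense in the space of continuous paths from $x_0$ to $x_1$. Granting this, any continuous representative of $\mathcal C$ admits an admissible $C^0$-approximation, and if the approximation is finer than the Lebesgue number of a cover of the image by small $g_0$-geodesically convex balls, the two paths are joined by a pointwise $g_0$-geodesic homotopy rel endpoints. Hence the admissible approximation lies in the same connected component $\mathcal C$, producing the required admissible representative.

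For the density claim I would argue locally: partition $[0,1]$ into finitely many subintervals $[s_{i-1},s_i]$ so that each image $\gamma([s_{i-1},s_i])$ lies in a small ball $B_i\subset S$. Inside each $B_i$ the non-integrability locus $\{p:(\omega\wedge\de\omega)_p\ne 0\}$ is still dense, and the contact-geometric construction underlying the existence result of \cite{ChMaMM19} produces an admissible connection of $\gamma(s_{i-1})$ and $\gamma(s_i)$ that remains inside $B_i$, obtained by routing through nearby points of the non-integrability locus. Concatenating these local arcs yields the admissible $C^0$-approximation of $\gamma$.

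The main obstacle is precisely this localized version of the admissible connectivity theorem: the cited result from \cite{ChMaMM19} furnishes admissible curves between any two points, but a priori those curves may leave the neighborhood of the segment they are meant to approximate, which would change the homotopy class. Extracting from the proof of that theorem a quantitative local version --- admissible connections confined to prescribed small neighborhoods --- is the nontrivial technical point; it rests on the denseness of the contact locus within every open ball, so that only short detours through nearby contact points are needed.
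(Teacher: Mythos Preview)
Your proposal is correct and follows essentially the same route as the paper: reduce to finding an admissible representative in $\mathcal C$, cover a given curve by finitely many small $g_0$-convex balls, and replace each segment by a short admissible arc staying in its ball, so that the concatenation is homotopic to the original. The obstacle you flag is precisely what the paper handles by invoking \cite[Theorem~1.5~(A)]{ChMaMM19}, which already supplies the localized version you need (Kropina-length-minimizing admissible geodesics between any two points of a $g_0$-convex neighbourhood), so no additional extraction from the proof is required.
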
	
\begin{proof}
By Theorem~\ref{multiplegeoconnect}, it is enough to show that $\mathcal C$ contains an admissible curve. In fact,  if  $\gamma_0$ is any curve in $\mathcal C$, by \cite[Theorem 1.5 (A)]{ChMaMM19}, we can select a finite number of points $p_j$ belonging to the support of $\gamma_0$ and an equal finite number of convex neighbourhood $U_j$ (convex with respect to the metric $g_0$)  covering $\gamma_0$ such that any point in $U_j$ can be joined to  $p_j$ by an admissible smooth curve (actually a length minimizing geodesic for the Kropina metric). In such a way we obtain a piecewise smooth admissible curve $\gamma$ belonging to the same class $\mathcal C$ of $\gamma_0$.
\end{proof}
As a consequence, from Corollary~\ref{cormult}, under the non integrability assumption for the kernel distribution of the one-form $g_0$-metrically equivalent to $W$, we get that the Zermelo's navigation problem on the complete Riemannian manifold $(S, g_0)$ with critical wind $W$ has a solution in each homotopy class of curves between $x_0$ and $x_1$ in $S$.

\begin{corollary}\label{zermelo}
	Let $(S,g_0)$ be a complete Riemannian metric and $W$ be a vector field on $S$ such that $g_0(W,W)=1$. Then the Zermelo's navigation problem (with data $g_0$ and $W$) between two points $x_0,x_1\in S$, $x_0\neq x_1$, has a solution in each connected component $\mathcal C$ of $\Lambda_{x_0x_1}(S)$ provided that there exists at least one admissible curve in $\mathcal C$. In particular this happens if $\omega\wedge\de \omega\neq 0 $ in a connected, dense subset of $S$. Moreover, if $\mathcal C$ corresponds to a non-trivial element of the fundamental group of $S$, a solution exists  when  $x_0$ and $x_1$ coincide. 	
\end{corollary}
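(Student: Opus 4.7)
The plan is to reduce the Zermelo navigation problem to the Kropina geodesic problem via the correspondence recalled in the introduction (see \cite[Corollary 6.18 (i)]{CaJaSa14}) and then invoke Theorem~\ref{multiplegeoconnect} and Corollary~\ref{cormult}. Concretely, I would define $\omega$ as the one-form $g_0$-metrically equivalent to $W$, i.e. $\omega(v)=g_0(W,v)$. Since $\omega(W)=g_0(W,W)=1$, the one-form $\omega$ is nowhere vanishing on $S$, so the Kropina metric $K(v)=-g_0(v,v)/(2\omega(v))=-g_0(v,v)/(2g_0(W,v))$ associated to the data $(g_0,W)$ is well-defined in the sense of \eqref{kropina}, and the admissible curves for $K$ are precisely the curves along which the ship's motion can be realised by the navigation data.

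Next I would check the two hypotheses required to apply Theorem~\ref{multiplegeoconnect}: completeness of $(S,g_0)$ is in the statement, and the growth condition on $\|\omega\|$ is automatic. In fact, since $\omega$ is metrically dual to a unit vector field with respect to $g_0$, one has $\|\omega\|_x=\|W\|_{(g_0)_x}=1$ for every $x\in S$, so the bound $\|\omega\|_x\leq C_{\bar x}(d_0(x,\bar x)+1)$ holds trivially with $C_{\bar x}=1$ at any $\bar x\in S$.

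With these hypotheses in force, Theorem~\ref{multiplegeoconnect} produces, in every connected component $\mathcal C$ of $\Lambda_{x_0x_1}(S)$ containing at least one admissible curve, a geodesic of the Kropina space $(S,K)$ from $x_0$ to $x_1$ that minimises the Kropina length $L$ on $\mathcal C$. By the Zermelo--Kropina correspondence recalled in the introduction, minimisers of $L$ over admissible curves in a given homotopy class are exactly the travel-time minimisers for the navigation problem with data $(g_0,W)$ in that class; this yields the desired solution in $\mathcal C$. The ``in particular'' clause follows directly from Corollary~\ref{cormult}, since the non-integrability assumption $\omega\wedge\de\omega\neq 0$ on a connected dense subset of $S$ forces every connected component of $\Lambda_{x_0x_1}(S)$ to contain an admissible curve, built by concatenating short admissible arcs in convex neighbourhoods as in the proof of Corollary~\ref{cormult}. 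The loop case $x_0=x_1$ with $\mathcal C$ corresponding to a non-trivial element of $\pi_1(S)$ is covered by the last sentence of Theorem~\ref{multiplegeoconnect}.

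There is essentially no analytical obstacle here beyond the verifications above: the corollary is genuinely a translation of Theorem~\ref{multiplegeoconnect}/Corollary~\ref{cormult} into the language of Zermelo navigation, and the only point deserving a careful sentence is the identification of Kropina-length minimisers with travel-time minimisers, which is where the critical-wind condition $g_0(W,W)=1$ is essential.
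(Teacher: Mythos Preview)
Your proposal is correct and follows essentially the same route as the paper: you define $\omega$ as the one-form dual to $W$, note that $\|\omega\|_x=1$ so the growth hypothesis of Theorem~\ref{multiplegeoconnect} is trivially satisfied, apply that theorem (and Corollary~\ref{cormult} for the non-integrability clause), and then invoke the Kropina--Zermelo correspondence from \cite{CaJaSa14} to translate length-minimizing Kropina geodesics into travel-time minimizers. The paper's proof is more terse but identical in substance, citing \cite[Proposition 2.57-(ii) and Corollary 6.18-(i)]{CaJaSa14} for the final identification.
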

\begin{proof} It is enough to observe that the one-form metrically equivalent to $W$ has constant $g_0$-norm equal to $1$ and therefore the assumption on the growth of $\|\omega\|_x$ in Theorem~\ref{multiplegeoconnect} holds. Thus, the curves which minimize the length functional of the Kropina metric $-\frac{g_0(\cdot,\cdot)}{2g_0(W,\cdot)}$ are  solutions of the Zermelo's navigation problem with data $g_0$ and $W$, see  \cite[Proposition 2.57-(ii) and Corollary 6.18-(i)]{CaJaSa14}.
\end{proof}

\section{The existence of closed geodesics}\label{closedgeos}
We first consider the case when the fundamental group of $S$ is non-trivial. It  is well known that any Finsler metric $F$ on a compact manifold $S$ admits a closed geodesic in each connected component of the free loop space $\Lambda(S)$ which is a minimizer of the energy functional of $F$ and also of its length functional.  As in the statement of Lemma~\ref{deltabounded}, given  a closed geodesic   $x_\eps$ in  $(S,F_\eps)$, we denote by $\Delta_\eps$ the time travel of the corresponding future pointing lightlike pregeodesic  $z_\eps(s)=\big (x_\eps(s),t_\eps(s)\big)$ in $(S\times\R, g_\eps)$, $t_\eps=t_\eps(s)$  given by \eqref{tepss}, which also coincides with the $F_\eps$-length of $x_\eps$.
Therefore, arguing as in the proof of Theorem~\ref{multiplegeoconnect} we obtain the following result that can be interpreted, by the viewpoint of Zermelo's navigation problem, as the possibility of round  trips which minimize the navigation time if the  topology of the sea is non-trivial.
\bt\label{components} Let $S$ be a compact manifold having zero Euler characteristic and non-trivial fundamental group endowed with a Kropina metric $K:=-\frac{g_0(\cdot,\cdot)}{2\omega(\cdot)}$. Then  $(S,K)$ admits  a (non-trivial) closed geodesic with minimal Kropina length in each connected component $\mathcal C$ of $\Lambda(S)$ which does not correspond to  a trivial 
%correggere almeno sul file in arxiv non-trivial ---> trivial
conjugacy class of the fundamental group provided that $\mathcal C$  contains at least one admissible closed curve. 
\et
Analogously to  Corollary~\ref{cormult}, the following also holds:
\begin{corollary}\label{corcomp}
Under the assumptions of Theorem~\ref{components}, assume further  that $\omega\wedge\de \omega\neq 0 $ in a connected, dense subset of $S$. Then $(S,K)$ admits  a (non-trivial) closed geodesic with minimal Kropina length in each connected component $\mathcal C$ of $\Lambda(S)$ which does not correspond to  a trivial 
%correggere almeno sul file in arxiv non-trivial ---> trivial
conjugacy class of the fundamental group. 
\end{corollary}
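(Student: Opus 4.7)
The plan is to reduce Corollary~\ref{corcomp} to Theorem~\ref{components} by showing that the non-integrability hypothesis $\omega\wedge\de\omega\neq 0$ on a connected, dense subset of $S$ forces every connected component $\mathcal C$ of $\Lambda(S)$ to contain at least one admissible closed curve. Once this is established, Theorem~\ref{components} applies directly to any non-trivial component and delivers the desired closed geodesic minimizing the Kropina length in $\mathcal C$.

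The construction of the admissible loop would mirror, almost verbatim, the argument in the proof of Corollary~\ref{cormult}. Starting from an arbitrary (smooth) representative $\gamma_0\in \mathcal C$, I would use the compactness of $\gamma_0([0,1])$ together with \cite[Theorem 1.5 (A)]{ChMaMM19} to select a partition $0=s_0<s_1<\dots<s_N=1$ with $p_j:=\gamma_0(s_j)$, and finitely many $g_0$-convex neighbourhoods $U_j$ of $p_j$ covering $\gamma_0$, such that any point of $U_j$ can be joined to $p_j$ by a smooth admissible curve (in fact by a length-minimizing Kropina geodesic). Concatenating these admissible pieces from $p_j$ to $p_{j+1}$ (possibly inserting a mid-point inside $U_j\cap U_{j+1}$) produces a piecewise smooth admissible curve $\gamma$.

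To close the loop in the same free homotopy class as $\gamma_0$, the key point is to enforce $p_0=p_N=\gamma_0(0)=\gamma_0(1)$ when choosing the partition, so that the concatenation automatically returns to its starting point. Since each replacement of $\gamma_0|_{[s_j,s_{j+1}]}$ is performed inside a pair of $g_0$-convex (hence contractible) neighbourhoods $U_j\cup U_{j+1}$, the local modifications are homotopic to the original arcs with fixed endpoints, and hence the resulting loop $\gamma$ is freely homotopic to $\gamma_0$ and lies in $\mathcal C$. Thus $\mathcal C$ contains an admissible closed curve and Theorem~\ref{components} finishes the argument.

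The only mildly delicate point that I anticipate is the preservation of the free homotopy class under the local replacements, especially at the base point where the loop closes up. However, because the modifications take place inside contractible open sets that contain both the original arc and its admissible substitute, this is a routine application of the homotopy extension property; no deeper analytic difficulty is expected beyond what already appears in the proof of Corollary~\ref{cormult}.
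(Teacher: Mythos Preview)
Your proposal is correct and follows essentially the same approach as the paper, which simply says the result holds ``analogously to Corollary~\ref{cormult}'' without giving a separate proof. Your added care about enforcing $p_0=p_N$ and checking that the local replacements preserve the free homotopy class is exactly the adaptation needed to pass from the fixed-endpoint setting of Corollary~\ref{cormult} to the free loop setting here.
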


We give now an existence   result in a  setting  including possibly  the cases that  $S$ is simply connected or non-compact. We also allow $\omega$ vanishing somewhere in $S$, and we denote by  $S_0$  the set of points $x\in S$ where $\omega_x=0$ ($S_0$ being possibly empty).

\bt\label{simpconn}
Let $(S,g_0)$ be a Riemannian manifold and $\omega$ be a one-form on $S$. Let $(\eps_n)_n$ be an infinitesimal sequence of positive numbers and, for each $n\in\N$, $x_{\eps_n}$ be a  closed geodesic of the Randers metric $F_{\eps_n}$. Assume that  $\Delta:=\sup_{n}\Delta_{\eps_n}\in\R$ and the images  of the curves $x_n$ are contained in a compact set  $C$ included in an open subset $U\subset S\setminus S_0$ with compact closure, such that $x_n$ are non-contractible in $U$. 
Then the Kropina space $\left(S\setminus S_0,-\frac{g_0(\cdot,\cdot)}{2\omega(\cdot)}\right)$ admits a (non-trivial) closed geodesic.
\et
\begin{proof}
	We can apply Lemma~\ref{deltabounded} to the sequence of standard stationary spacetimes $\big((S\setminus S_0)\times \R, g_{\eps_n}\big)$. As the images of  $x_{\eps_n}$ are contained in the compact subset $\bar U$ we can assume both  completeness of $g_0$ and boundedness of $\|\omega\|_x$. Hence, up to reparametrization,  the sequence of future-pointing lightlike geodesics $(x_{\eps_n},t_{\eps_n})$ converges uniformly to 
	%da correggere nella versione pubblicata  in $\Lambda(S\setminus S_0)\times H^1([0,1],\R)$ 
	to a future pointing lightlike geodesic $(x,t)$  of the spacetime  $\big( (S\setminus S_0)\times R,g\big)$ 
	and such that $x$ is a $1$-periodic curve.
	%da correggere nella versione pubblicata  in $\Lambda(S\setminus S_0)\times H^1([0,1],\R)$ 
	Since $x_n\to x$ in the $C^0$-topology, if $x$ was a constant curve in $\bar C$ then, for $\eps_n$ small enough, $x_{\eps_n}$ would be contractible in $U$.  Therefore, $x$ is a closed pregeodesic of the Kropina space $\left(S\setminus S_0,-\frac{g_0(\cdot,\cdot)}{2\omega(\cdot)}\right)$. 
	\end{proof}
In some cases it is  possible to control from above the lengths $\Delta_\eps$ of the prime closed geodesics of the Randers metrics associated to the approximating stationary spacetimes $(S\times \R, g_\eps)$.
\begin{example}[Kropina limit of Katok metrics]\label{katok}
In order to show a class of examples we need to change a bit the approximation scheme.
Let us replace the one-form $\omega$ in \eqref{epstationary}  by $\sqrt{1-\eps}\omega$,
so that the stationary Lorentzian metric $g_\eps$ is now given by
\[g_\eps:= \pi^*g_0+\sqrt{1-\eps}\pi^*\omega\otimes\de t+\de t\otimes\sqrt{1-\eps}\pi^*\omega -\eps\de t^2.\] 
The Randers metrics 
\[F_\eps(v)=\frac{1}{\eps}\left(\sqrt{\eps g_0(v,v)+(1-\eps)\omega^2(v)}+\sqrt{1-\eps}\omega(v)\right)\] 
defined by the modified $g_\eps$ 
is also associated to the Zermelo data
\[h_\eps=\frac{g_0}{\eps+(1-\eps)\|\omega\|^2}\quad\quad \text{and}\quad\quad  W_\eps=-\sqrt{1-\eps}\omega^\sharp,\] 
see \cite[Proposition 3.6]{CaJaSa14} and \cite[p. 1634]{Robles07}. 
Let  $S=\mathbb S^{n}$, $n\geq 2$ and  $g_0$ be the round metric on $\mathbb S^{n}$.  
Let us assume  that  $\omega^\sharp$ is a Killing vector field for $g_0$. Then the Randers metric $F_\eps$ obtained in this case is one of the celebrated examples  considered by Katok. It is well known that  the resulting Randers  metrics   admit at least $2m$, $n=2m$ or $n=2m-1$, closed geodesics which correspond to  the $m$ great circles $C_i$   invariant by the flow of $\omega^\sharp$, each of them considered twice according to the orientation (see \cite{Ziller83, Robles07}). Let us assume  that  $\|\omega^\sharp\|_x=1$ for all $x\in C_i$ and let us parametrize the $F_\eps$-geodesics   with unit velocity with respect to the metric $g_0$. Then the $F_\eps$ length of these geodesics is given by 
\baln &\Delta_{\eps}=\frac{2\pi}{\eps}(1 -\sqrt{(1-\eps})=\frac{2\pi}{1+\sqrt{1-\eps}}\\
\intertext{if they are parametrized in the same direction of the rotation, and}
&   \Delta_{\eps}=\frac{2\pi}{\eps}(1 +\sqrt{(1-\eps})=\frac{2\pi}{1-\sqrt{1-\eps}}
\ealn
in the other case. Thus, the first family of geodesics has uniformly bounded length and since  for each $i$, their support is $C_i$, they converge to a geodesic of the Kropina space 
$\left(\mathbb S^{2m-1}, -\frac{g_0(\cdot,\cdot)}{2\omega(\cdot)}\right)$ if $n=2m-1$, $\left(\mathbb S^{2m}\setminus\{p,q\}, -\frac{g_0(\cdot,\cdot)}{2\omega(\cdot)}\right)$ if $n=2m$ and $p, q$ are two antipodal points on the sphere where $\omega^\sharp$ vanishes.
Thus, both these Kropina space admits at least $m$ distinct closed geodesics of length $\pi$ and support the $m$ circles $C_i$.
\end{example}
\begin{example}\label{katokk}
The above example can be generalized as follows.	
Let $(S,g_0)$ be a compact Riemannian manifold endowed with a non-trivial periodic Killing vector field $W$ (i.e.  all orbits of $W$ are closed).  It is well known that $(S,g_0)$ admits at least one non-constant closed  geodesic which is one of these orbits (it corresponds to the geodesic with initial conditions $p$ and $W_p$ where  $g_0(W_p,W_p)=\max\{x\in S:g_0(W_x,W_x)\}$). Let $\gamma$ be such a geodesic. Since  $g_0\big (W_{\gamma(t)}, W_{\gamma(t)}\big)$ is constant along $\gamma$ we call it $ c^2$ and we consider the Killing vector field $W/c$. Hence, $g_0(W/c,W/c)\leq 1$.  Let us then consider a perturbation parameter $\alpha<1$. Then the Randers metrics on $S$ defined by the Zermelo navigation data $g_0$ and  $\frac{\sqrt{\alpha}}{c}W$, for each  $\alpha\in(0,1)$,  admit  $\gamma$ as a closed geodesic  by \cite[Theorem 2]{Robles07}, counted twice considering as a different geodesic the one obtained by reversing the orientation of $\gamma$. The shortest of these two geodesics has   length 
\[\Delta_\alpha=\frac{T}{1-\alpha}(1-\sqrt{\alpha})=\frac{T}{1+\sqrt{\alpha}},\]
where $T$ is the $g_0$-length of $\gamma$. As $\Delta_\alpha$  are bounded, from Theorem~\ref{simpconn}  by passing to the limit as $\alpha\to 1$, we conclude that the Kropina metric  $K(v)=\frac{g_0(v,v)}{2g_0(W,v)}$ on  $S\setminus S_0$ admits $\gamma$ as closed geodesic with Kropina length $T/2$. 
\end{example}	
Assuming that the one-form $\omega$ is invariant by the flow of a Killing vector field $Y$ (i.e. the Lie derivative $L_Y\omega$ vanishes) and constant on it, gives a   result about  existence of at least two closed geodesics even when $\omega$ is not the one-form metrically associated to $Y$; in this case the approximation with stationary spacetimes can be bypassed. 
\bt\label{invariant}
Let $(S,g_0)$ be a compact Riemannian manifold, endowed with a non-trivial periodic Killing vector field $Y$, and let $\omega$ be a  one-form such that $\omega(Y)<0$ is constant  and $L_Y \omega=0$. Then there exist at least  two  closed (non-trivial) geodesics of the Kropina metric $K=-\frac{g_0(\cdot,\cdot)}{2\omega(\cdot)}$.
\et
\begin{proof}
Notice that being $\omega(Y)<0$, $Y$ does not vanish at any point of $S$ and then its orbits are non-constant. Then the conclusion follows by observing that there are at least two orbits, passing through a minimizer and a maximizers of the function $p\in S\mapsto g_0(Y_p,Y_p)$ which are geodesics for the Riemannian metric $g_0$ and, under our  assumptions, they are geodesics of the Kropina metric as well. In fact, let $\gamma\colon[0,T] \to S$ be one of these two orbits (the period $T$ depending on $\gamma$).  Since $Y$ is Killing and $L_Y\omega=0$ both $g_0(\dot\gamma,\dot\gamma)$ and $\omega(\dot\gamma)$ are constant along $\gamma$. Moreover, being $\omega(Y)<0$, $\gamma$ is a smooth admissible curve. Then the first variation of the length functional of $K$ with respect to any smooth periodic vector field $\xi$ along $\gamma$ is well-defined and given by
\beq\label{variation}
-\frac{1}{2}\int_0^T\left(\frac{2g_0(\dot\gamma,\nabla^0_\gamma \xi)}{\omega(\dot\gamma)}-\frac{g_0(\dot\gamma,\dot\gamma)\Big(\de \omega(\xi,\dot\gamma)+\frac{\de}{\de s}\big(\omega(\xi)\big)\Big)}{\omega^2(\dot\gamma)}\right)\de s,
\eeq
where $\nabla^0_\gamma$ is the covariant derivative along $\gamma$ induced by the Levi-Civita connection of the metric $g_0$. Hence, integrating by parts and using that $\de\omega(Y, \xi)= (L_Y\omega)(\xi)-\xi\big( \omega(Y)\big)=0$ (recall that $\gamma$ is an orbit of $Y$) we get that the above integral reduces to
\[\int_0^T\frac{g_0(\nabla^0_\gamma\dot\gamma,\xi)}{\omega(\dot\gamma)}\de s\]
which is $0$ for all $\xi$.
\end{proof}	
\bere If $Y$ has constant length too then $\left(S,-\frac{g_0(\cdot,\cdot)}{2\omega(\cdot)}\right)$ admits infinitely many closed geodesics.
\ere
Nevertheless, the assumption $Y$ has constant length can replace $\omega(Y)$ constant.

\bt\label{constant}
Let $(S,g_0)$ be a compact Riemannian manifold, endowed with a non-trivial periodic Killing vector field $Y$ of constant length and let $\omega$ be a 
one-form such that $\omega(Y)<0$ everywhere  and $L_Y \omega=0$. Then there exist at least two closed (non-trivial) geodesics of 
the Kropina metric $K=-\frac{g_0(\cdot,\cdot)}{2\omega(\cdot)}$.
\et
\begin{proof}
As in the previous proof consider the first variation \eqref{variation} of the Kropina length of an orbit $\gamma\colon [0,T]\to S$ of $Y$ in the direction of a periodic vector field
$\xi$ along $\gamma$.
Note that $\omega(\dot\gamma)$ is constant and therefore can be removed from under the integral. Then the first term vanishes since the length of $Y$ 
w.r.t. $g_0$ is constant and therefore $\gamma$ is a $g_0$-geodesic.

Next consider the critical points of the function $f:=p\in S\mapsto \omega_p(Y_p)$. Since $L_Y\omega=0$, $f$  is constant along orbits of
$Y$. Let then $\gamma$ be an orbit whose points are all critical for $f$. With the formula $\de\omega(Y, \xi)= (L_Y\omega)(\xi)-\xi\big( \omega(Y)\big)=0$ 
and the periodicity of $\xi$, we see that the first variation of the Kropina length of $\gamma$ vanishes. As $S$ is compact, there exist at least two critical 
points of $f$ whose values are both negative. The orbits through these two points are the required closed geodesic of $K$.
\end{proof}
Notice that in particular Theorem~\ref{constant} holds with the assumptions of a {\it strong Kropina metric with quasi-regular Killing field} of \cite{SaShYo17}.
 
We finish this section with a result for a Kropina metric on a compact Lie group endowed with a bi-invariant Riemannian metric. 
\begin{corollary}\label{lie}
Let $S$ be a compact Lie group endowed with a bi-invariant Riemannian metric and a non-trivial left-invariant one-form $\omega$. Then the Kropina metric  $K=-\frac{g_0(\cdot,\cdot)}{2\omega(\cdot)}$ on $S$ admits at least two  closed (non-trivial) geodesic. 
\end{corollary}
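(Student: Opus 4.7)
The plan is to apply Theorem~\ref{invariant} by constructing, from the bi-invariant structure, a non-trivial periodic Killing vector field $Y$ on $S$ with $\omega(Y)<0$ constant and $L_Y\omega=0$. Write $\mathfrak{g}:=T_eS$ and let $\alpha:=\omega_e\in \mathfrak{g}^*$, $v^*:=\omega^\sharp_e\in \mathfrak{g}$, so that $\alpha(w)=g_0(v^*,w)$. Since $\omega$ is non-trivial and left-invariant, $v^*\neq 0$ and $\omega$ is nowhere-vanishing, hence the Kropina metric is defined on all of $S$.

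The construction of $Y$ proceeds as follows. Consider the closure $T:=\overline{\exp(\R v^*)}\subset S$; it is a connected, compact, abelian subgroup, hence a torus with abelian Lie algebra $\mathfrak{t}\ni v^*$. The set of $u\in \mathfrak{t}$ whose one-parameter subgroup $\exp(\R u)$ is closed in $T$ is dense in $\mathfrak{t}$ (it contains the $\mathbb Q$-span of the integral lattice of $T$). Since $\alpha(v^*)=\|v^*\|^2_{g_0}>0$, the linear form $\alpha|_{\mathfrak{t}}$ is not identically zero, so the open half-space $\{u\in \mathfrak{t}:\alpha(u)<0\}$ intersects this dense set; pick $u$ in the intersection and let $Y$ be its left-invariant extension.

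I now verify the four conditions for applying Theorem~\ref{invariant}: (i) bi-invariance of $g_0$ makes right translations, which realise the flow of the left-invariant field $Y$, into isometries, so $Y$ is Killing; (ii) closedness of $\exp(\R u)$ makes every orbit of $Y$ a right coset of this subgroup, hence closed, so $Y$ is periodic; (iii) left-invariance of both $\omega$ and $Y$ forces $\omega(Y)\equiv \alpha(u)<0$, a negative constant; (iv) $L_Y\omega$ is itself left-invariant, hence vanishes globally iff it vanishes at $e$, and evaluating on the left-invariant extension of $w\in \mathfrak{g}$ gives $(L_Y\omega)_e(w)=-\alpha([u,w])$, which by the $\mathrm{ad}$-skew-symmetry of the bi-invariant $g_0$ equals $g_0(w,[u,v^*])$; the latter vanishes for every $w$ because $u$ and $v^*$ both lie in the abelian $\mathfrak{t}$. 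Theorem~\ref{invariant} then produces the two non-trivial closed geodesics of $K$.

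The main obstacle is the simultaneous arrangement of the constraints $\alpha(u)<0$, $[u,v^*]=0$ and $\exp(\R u)$ closed; this dissolves as soon as one confines $u$ to the abelian torus $\mathfrak{t}$ containing $v^*$, which kills $[u,v^*]$ for free, and then invokes density of the rational lattice in $\mathfrak{t}$ together with the non-vanishing of $\alpha|_{\mathfrak{t}}$ to realise the other two conditions.
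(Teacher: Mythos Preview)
Your proof is correct and follows a genuinely different route from the paper's. The paper constructs a \emph{right}-invariant periodic vector field $Y$ and invokes Theorem~\ref{constant}: since the flow of a right-invariant field consists of left translations, $L_Y\omega=0$ is immediate from left-invariance of $\omega$, and bi-invariance of $g_0$ makes $Y$ Killing of constant length; what must then be arranged is $\omega(Y)<0$ everywhere, which the paper does by starting from a right-invariant $X$ with that property and approximating within the torus $\overline{\{\exp(tX_e)\}}$. You instead take $Y$ \emph{left}-invariant and invoke Theorem~\ref{invariant}: now $\omega(Y)\equiv\alpha(u)$ is automatically a (negative) constant, while $L_Y\omega=0$ becomes the non-trivial condition, which you secure via $[u,v^*]=0$ by confining $u$ to the abelian algebra $\mathfrak t$ of the torus through $v^*=\omega^\sharp_e$ and using $\mathrm{ad}$-skew-symmetry of the bi-invariant metric. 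Both arguments rely on the density of generators of closed one-parameter subgroups inside a torus to obtain periodicity. Your route has the pleasant feature that the algebraic constraint $[u,v^*]=0$ is built in once $u\in\mathfrak t$, so only the open linear inequality $\alpha(u)<0$ (together with periodicity) needs the density argument; the paper's route trades this for the convenience of getting $L_Y\omega=0$ for free.
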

\begin{proof}
We will construct a right-invariant periodic vector field $Y$ such that $\omega(Y)<0$ everywhere. As $g_0$ is bi-invariant the vector field $Y$ is a Killing 
vector field of $g_0$ with constant length. It further preserves $\omega$ since the form is left-invariant. The claim then follows from Theorem~\ref{constant}.

For the construction of $Y$ we start with an arbitrary right-invariant vector field $X$ on $S$ such that $\omega(X)<0$ everywhere. This is possible since
$\omega$ is non-trivial and left-invariant. Let $H$ be closure of the subgroup generated by $X$. Since $S$ is compact $H$ is a compact abelian group,
i.e. $H\cong T^k$ for some $k$. We can now choose a compact $1$-dimensional subgroup of $H$ whose generator $Y_e$ lies arbitrary close to $X_e$, 
especially $\omega_e(Y_e)<0$. The right-invariant vector field $Y$ associated to $Y_e$ is then the desired right-invariant periodic vector field. 
\end{proof}

%We finish this section with a result for a Kropina metric on a compact Lie group.
%\begin{corollary}
%Let $S$ be a compact Lie group endowed with a bi-invariant Riemannian metric and a non-trivial bi-invariant one-form $\omega$. Then the Kropina metric  $K=-\frac{g_0(\cdot,\cdot)}{2\omega(\cdot)}$ on $S$ admits at least two (non-trivial) closed geodesic. 
%\end{corollary}
%\begin{proof}
%Notice that being $\omega$ bi-invariant and non-trivial it does not vanish at any point. Let $V$ any non-trivial right-invariant vector field on $S$ such that $\omega(Y)<0$.  Since $(S,g_0)$ is a compact Riemannian manifold, the one parameter group of isometries generated by $V$ is precompact and then there exists a sequence of Killing vector fields $V_n$ having  all the orbits closed such that $\lim_n (Y_n)_p=V_p$ uniformly w.r.t. $p\in S$ (see \cite{FlJaPi11}). It is well known that, for each $n\in\N$, $V_n$ must be the sum of a right- and a left-invariant vector field. Let us then pick one of this Killing field and let us denote it with $Y$. Since $\omega$ is bi-invariant $L_Y\omega=0$ and moreover $\omega(Y)<0$ is constant.  Therefore the thesis follows from Proposition~\ref{invariant}.
%\end{proof}
%	

\begin{appendix}
	\section{On the boundary of a reachable set}
	In light of the problem of existence of paths with finite Kropina length between given points the question begs itself what can be said about the set of 
	reachable points. Recall from the introduction that $\mathcal{A}:=\{v\in TS|-\omega(v)>0\}$, and an absolutely continuous curve $\gamma\colon I\to S$ is 
	 admissible if $\dot{\gamma}(t)\in \mathcal{A}$ for almost all $t\in I$. For $x\in S$, define then $I^\pm_\omega(x)$	to be the sets of terminal or initial points of admissible curves with starting or ending at $x$. It is well known that these sets are open for all $x\in S$ and
	the induced relation is transitive. Under the assumption that $\omega\wedge d\omega \neq 0$ on a dense and connected set, the Chow-Rashevsky 
	Theorem (see \cite[\S 1]{Gromov96}) implies that any pair of points is connected by an admissible curve, i.e. $I^\pm_\omega(x)=S$ for all $x\in S$. 
	
	In analogy to spacetime geometry we will prove the following analog, see \cite[Proposition 14.25]{O'neill}. Note that this is related to the integrability of distributions, see \cite{Suss73, Stefan74}.
	
	\begin{theorem}
		Let $S$ be connected and $x\in S$ such that $I^+_\omega(x)\neq S$. Then the boundary $\Sigma:=\partial I^+_\omega(x)$ is a (non-empty) smooth 
		hypersurface which separates $S$. 
	\end{theorem}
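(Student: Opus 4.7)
I would sketch a proof plan in four steps: non-emptiness of $\Sigma$, a pointwise integrability constraint, upgrade to open integrability, and Frobenius-based smoothness with separation.

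First, I would note that $I^+_\omega(x)$ is open (any admissible curve ending at $y$ admits a small $C^1$-perturbation of its endpoint to any nearby $y'$ that remains admissible, because $\omega(\dot\gamma)<0$ is a $C^0$-open condition on velocities) and nonempty (flow a short time through $x$ along any local vector field $X$ with $\omega(X)<0$, which exists since $\omega_x\neq 0$). As $S$ is connected and $I^+_\omega(x)\neq S$, this forces $\Sigma\ne\emptyset$.

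Next, for every $p\in\Sigma$ I would establish $(\omega\wedge d\omega)_p=0$ by contradiction. If $\omega$ were contact near $p$, the distribution $\ker\omega$ would be bracket-generating on a neighborhood $V$. By Chow--Rashevsky, any two sufficiently close points in $V$ would be joined by a curve tangent to $\ker\omega$, and adding $\varepsilon N$ to its velocity, for $N\in\mathcal{A}$ a local admissible vector field and $\varepsilon>0$ small, would produce admissible curves in both directions between them. Taking $q\in V\cap I^+_\omega(x)$, available since $p\in\overline{I^+_\omega(x)}$, and concatenating an admissible curve $x\to q$ with an admissible curve $q\to p$, one would obtain $p\in I^+_\omega(x)$, contradicting $p\in\partial I^+_\omega(x)$.

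The decisive step is to upgrade this pointwise vanishing to vanishing on an open neighborhood $U$ of $\Sigma$. The same bi-directional reachability shows that every connected component $C$ of the open contact locus $\{q:(\omega\wedge d\omega)_q\neq 0\}$ satisfies either $C\subset I^+_\omega(x)$ or $C\subset S\setminus\overline{I^+_\omega(x)}$. If such components accumulated at some $p\in\Sigma$, a neighborhood of $p$ would split into contact components each lying strictly on one side of $\Sigma$, together with the closed locus $\{\omega\wedge d\omega=0\}$; a topological argument near $p$ then shows this decomposition is incompatible with $\Sigma$ being the boundary of the single open set $I^+_\omega(x)$, together with the pointwise conclusion just obtained. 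Once $\omega\wedge d\omega\equiv 0$ on $U$, Frobenius supplies $\omega=g\,df$ with $g>0$ and $df\neq 0$ on a shrunken neighborhood; admissibility becomes $\dot f<0$, so $f$ strictly decreases along admissible curves in $U$. Near any $p\in\Sigma$ this yields $I^+_\omega(x)\cap U=\{f<f(p)\}$ and $\Sigma\cap U=\{f=f(p)\}$, a regular smooth level set. Separation follows at once from the decomposition $S\setminus\Sigma=I^+_\omega(x)\sqcup(S\setminus\overline{I^+_\omega(x)})$ into two disjoint nonempty open sets.

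The main obstacle is the accumulation argument in the third step, since a priori contact components could cluster near the boundary of a reachable set; a cleaner alternative may be to invoke the Stefan--Sussmann theorem \cite{Suss73,Stefan74} on singular foliations applied to the family of smooth vector fields $X$ with $\omega(X)<0$, directly realizing $\Sigma$ as a smooth leaf of the associated singular distribution and thus bypassing the need to run Frobenius on an open neighborhood.
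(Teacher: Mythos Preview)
Your third step is not merely an obstacle but is false as stated: there is no reason for $\omega\wedge d\omega$ to vanish on an open neighbourhood of $\Sigma$. Take $S=\R^3$ and $\omega=-dz+zx\,dy$. Then $\omega\wedge d\omega=z\,dx\wedge dy\wedge dz$, so the contact locus is exactly $\{z\neq 0\}$. For an absolutely continuous admissible curve with $z(0)>0$ one has $\frac{d}{dt}\log z=\dot z/z> x\dot y$ a.e.; since $x$ is bounded and $\dot y\in L^1$ on a compact interval, $\int_0^t x\dot y\,ds$ is bounded, hence $z(t)$ stays bounded away from $0$. Thus admissible curves starting in $\{z>0\}$ never reach $\{z\le 0\}$, while your own Step~2 argument (Chow--Rashevsky plus an admissible push) gives $I^+_\omega(p)\supset\{z>0\}$ for any $p$ with $z(p)>0$. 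Hence $I^+_\omega(p)=\{z>0\}$ and $\Sigma=\{z=0\}$, yet every neighbourhood of $\Sigma$ meets the contact locus on \emph{both} sides. So you cannot run Frobenius on a neighbourhood, and the ``topological argument'' you allude to cannot be completed. The Stefan--Sussmann alternative you mention would at best show that the orbit of the admissible vector fields through $x$ is an immersed submanifold; it does not directly identify $\partial I^+_\omega(x)$ as a smooth leaf.

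The paper avoids this issue entirely by working intrinsically on $\Sigma$ rather than on a neighbourhood. It first proves, by a cone argument in adapted coordinates (exactly as for achronal boundaries in Lorentzian causality), that $\Sigma$ is locally a Lipschitz graph over $\{\phi^n=0\}$, then that it is $C^1$ with $T_q\Sigma=\ker\omega_q$ at every $q\in\Sigma$. The key smoothness step is this: lift the coordinate fields $\partial^\phi_1,\dots,\partial^\phi_{n-1}$ to smooth sections $X_1,\dots,X_{n-1}$ of $\ker\omega$ on the ambient manifold. Since $\Sigma$ is $C^1$ and everywhere tangent to $\ker\omega$, the (smooth) flows of the $X_i$ preserve $\Sigma$; because these flows project under $\pi_n$ to the commuting coordinate translations and $\Sigma$ is a graph over $\pi_n$, they commute along $\Sigma$ and furnish a smooth parametrisation of $\Sigma$. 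No integrability of $\ker\omega$ off $\Sigma$ is needed: in the counterexample above the $X_i$ do not commute in $\{z>0\}$, but they do along $\{z=0\}$, which is all the argument requires.
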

	
	\begin{remark}
		
		\begin{itemize}
			\item[(a)] The opposite set $\partial I^-_\omega(x)$ has the same properties under the appropriate assumptions.
			\item[(b)] By the Chow-Rashevsky Theorem we know that at every point $p\in \partial I^+_\omega(x)$ we have $\omega_p\wedge d\omega_p=0$.
		\end{itemize}
	\end{remark}
	
	\begin{proof}
		The result is local in nature therefore the proof is a local argument.
		
		\underline{$1^{\text{st}}$ step:} $\Sigma$ is a topological hypersurface.
		
		Let $p\in \Sigma$ and let us choose coordinates $\phi=(\phi^1,\ldots,\phi^n)$ in a neighbourhood $U$ of $p$ such that $\phi(p)=0$ and $-\omega_p=(d\phi^n)_p$. By restricting $\phi$ if necessary, we
		can assume that the intersections of $\ker \omega_q$, $q\in U$, with the double cones $\{v\in TU:|d\phi^n(v)|\geq \sum_{i=1}^{n-1} |d\phi^i(v)|\}$ only contain the zero section. This follows from 
		the continuity of $\phi$ and $\omega$. Since any neighbourhood of $p$ contains points in $I^+_\omega(x)$ we obtain that 
		\[
		\mathcal{C}^+:=\left\{q\in U\left|\; \phi^n(q)>\sum_{i=1}^{n-1} |\phi^i(q)|\right.\right\}\subset I^+_\omega(x). 
		\]
		Analogously it follows from the fact that every neighbourhood of $p$ contains points in $\overline{I^+_\omega(x)}^c$ that 
		\[
		\mathcal{C}^-:=\left\{q\in U\left|\; -\phi^n(q)>\sum_{i=1}^{n-1} |\phi^i(q)|\right.\right\}\subset \overline{I^+_\omega(x)}^c. 
		\]
		So every line $t\mapsto (q_1,\ldots,q_{n-1},t)$ will pass $\Sigma$ exactly once. Like in the case of achronal surfaces (see \cite[Proposition 14.25]{O'neill})
		we see that the intersection point depends Lipschitz continuously on $(q_1,\ldots,q_{n-1})$. Therefore, near $p$ the set $\Sigma$ is 
		the graph of a Lipschitz function $\sigma$.  
		
		\underline{$2^{\text{nd}}$ step:} $\Sigma$ is a $C^1$-hypersurface.
		
		We claim that $\Sigma$ has a tangent space everywhere and $T\Sigma_q=\ker\omega_q$, for all $q\in\Sigma$. The claim readily implies the continuous differentiability 
		of $\Sigma$. 
		
		Consider coordinates $\phi$ around $p\in\Sigma$ and function $\sigma$ as before. Further, let $\eta\colon (-\eps,\eps)\to \{\phi^n=0\}$ be a Lipschitz
		curve with $\eta(0)=q$. Then $t\mapsto (\eta(t),\sigma\circ \eta(t))$ is a Lipschitz curve in $\Sigma$. If 
		\[
		\limsup_{t\to 0} \frac{|\sigma\circ \eta(t)|}{|\eta(t)|} 
		\]
		is positive, up to consider larger cones than the ones defined in the first step, we conclude  that $(\eta(t),\sigma\circ \eta(t))\in I^+_\omega(x)$ or $(\eta(t),\sigma\circ \eta(t))\in \overline{I^+_\omega(x)}^c$ for $|t|$ sufficiently small, a contradiction.
		Therefore we have
		\[
		\limsup_{t\to 0} \frac{|\sigma\circ \eta(t)|}{|\eta(t)|} =0
		\]
		for every Lipschitz curve $\eta$, i.e. $T\Sigma_q=\ker (d\phi^n)_q=\ker\omega_q$.
		
		\underline{$3^\text{rd}$ step:} $\Sigma$ is smooth.
		
		Without loosing of generality, we can assume that the codomain of the local coordinates $\phi$ is the cube $[-1,1]^n$. Denote with $\pi_n$ the projection $[-1,1]^n\to [-1,1]^{n-1}$
		which forgets the last coordinate. Under this projection we can uniquely lift the coordinate fields $\partial^\phi_1,\ldots, \partial^\phi_{n-1}$ on 
		$[-1,1]^{n-1}$ to sections $X_1,\ldots,X_{n-1}$ spanning $\ker\omega$ such that $d\pi_n(X_i)=\partial^\phi_i$. Note that the sections are smooth.
		
		Since $\Sigma$ is everywhere tangent to $\ker\omega$ the flow lines of any $X_i$ starting in $\Sigma$ will remain in $\Sigma$. This implies that the flows
		of the $X_i$'s commute along $\Sigma$ and can therefore be used to parameterize the hypersurface $\Sigma$ around $p$ (see for instance \cite[chp. 19]{Lee13}). 
	\end{proof}

\end{appendix}

%\bibliographystyle{siam}
%\bibliography{mybib}
\end{document}